\documentclass[11pt]{amsart}

\usepackage[a4paper,hmargin=3.5cm,vmargin=4cm]{geometry}
\usepackage{amsfonts,amssymb,amscd,amstext}
\usepackage{graphicx}
\usepackage[dvips]{epsfig} 
\usepackage{quoting}
\usepackage{hyperref} 


\usepackage{fancyhdr}
\pagestyle{fancy}
\fancyhf{}

\input xy
\xyoption{all}



\usepackage{enumerate}
\usepackage{titlesec}
\usepackage{mathrsfs}

\pretolerance=2000
\tolerance=3000


\headsep 3mm 

\linespread{1.1}

\setlength{\parskip}{0.5em}

\titleformat{\section}
{\filcenter\bfseries} {\thesection{.}}{0.2cm}{}
\titleformat{\subsection}[runin]
{\bfseries} {\thesubsection{.}}{0.15cm}{}[.]
\titleformat{\subsubsection}[runin]
{\em}{\thesubsubsection{.}}{0.15cm}{}[.]

\usepackage[up,bf]{caption}


\newtheorem{theorem}{Theorem}[section]

\newtheorem{claim}[theorem]{Claim}
\newtheorem{corollary}[theorem]{Corollary}

\theoremstyle{definition}
\newtheorem{definition}[theorem]{Definition}
\newtheorem{remark}[theorem]{Remark}

\numberwithin{equation}{section}
\numberwithin{figure}{section}


\newcommand\Ccal{\mathcal{C}}
\newcommand\Dcal{\mathcal{D}}

\newcommand\Ical{\mathcal{I}}
\newcommand\Kcal{\mathcal{K}}
\newcommand\Lcal{\mathcal{L}}

\newcommand\Fcal{\mathcal{F}}




\newcommand\D{\overline{\mathbb D}}

\renewcommand\D{\mathbb D}

\newcommand\N{\mathbb{N}}

\newcommand\Q{\mathbb{Q}}
\newcommand\R{\mathbb{R}}

\newcommand\Z{\mathbb{Z}}

\renewcommand\c{\mathbb{C}}

\renewcommand\d{\mathbb D}

\newcommand\n{\mathbb{N}}
\renewcommand\r{\mathbb{R}}

\newcommand\z{\mathbb{Z}}



\newcommand\pgot{\mathfrak{p}}

\newcommand\ggot{\mathfrak{g}}

%
%

%
%


%
%

%
%

%
%

\newcommand\dist{\mathrm{dist}}

\newcommand\CMI{\mathrm{CMI}}

\def\dist{\mathrm{dist}}


\usepackage{color}

\begin{document}


\fancyhead[LO]{Generic properties of minimal surfaces }
\fancyhead[RE]{A.\ Alarc\'on and F.J.\ L\'opez}
\fancyhead[RO,LE]{\thepage}

\thispagestyle{empty}


\begin{center}
{\bf\Large Generic properties of minimal surfaces 
}

\medskip

%
%
{\bf Antonio Alarc\'on\,  and\,  Francisco J.\ L\'opez}
\end{center}

%
%
\medskip

\begin{quoting}[leftmargin={7mm}]
{\small
\noindent {\bf Abstract}\hspace*{0.1cm}
Let $M$ be an open Riemann surface and  $n\ge 3$ be an integer. In this paper we establish some generic properties 
 (in Baire category sense) in the space of all conformal minimal immersions $M\to\R^n$ endowed with the compact-open topology, pointing out that  a generic such immersion is chaotic in many ways.  For instance, we show that a generic conformal minimal immersion $u\colon M\to \r^n$ is non-proper, almost proper, and $\ggot$-complete with respect to any given Riemannian metric $\ggot$ in $\r^n$. Further, its image $u(M)$ is dense in $\r^n$ and disjoint from $\Q^3\times \r^{n-3}$, and has infinite area, infinite total curvature, and  unbounded curvature on every open set in $\r^n$. In case $n=3$, we also prove that a generic conformal minimal immersion $ M\to \r^3$  has infinite index of stability on every open set in $\r^3$.

\noindent{\bf Keywords}\hspace*{0.1cm} 
Minimal surface, Riemann surface, 
completely metrizable space, generic property, residual set, Baire category theorem.


\noindent{\bf Mathematics Subject Classification (2020)}\hspace*{0.1cm} 
53C42, 
53A05, 
54E52. 
}
\end{quoting}


\section{Introduction and main results}\label{sec:intro}

\noindent
A set in a topological space $X$ is {\em residual} (or {\em comeagre}) 
if it contains a countable intersection of dense open sets. 
If $X$ is completely metrizable (or, more generally, a Baire space) then every residual set in $X$ is dense.\footnote{This property characterizes Baire spaces. Every completely metrizable space is a Baire space by the Baire Category Theorem; see, e.g., \cite[Corollary 25.4, p.\ 186]{Willard1970}.}
%
%
A countable intersection of residual sets is still residual, so residual sets in completely metrizable spaces are considerably {\em large}. We say that a property of elements in a completely metrizable space is {\em generic}, or that a {\em generic} element satisfies the property, if the property holds on a residual subset (see, e.g., \cite[Definition 8.5]{Kechris1995}). Thus, generic properties are those enjoyed by {\em almost all} elements of the space in this precise topological sense.

Throughout the paper, $n\ge 3$ is an integer, $M$ is an open Riemann surface, and ${\rm CMI}(M,\R^n)$ denotes the space of all conformal minimal immersions $M\to\R^n$ endowed with the compact-open topology. A sequence $u_j\in {\rm CMI}(M,\R^n)$, $j\in\N$, converges in this topology to an immersion $u\in {\rm CMI}(M,\R^n)$ if and only if the sequence of restrictions $u_j|_K$ to any compact set $K\subset M$ converges uniformly on $K$ to $u|_K$ (see, e.g., \cite[\textsection VII.2]{Bredon1993}).  
In this paper we shall establish several generic properties in ${\rm CMI}(M,\R^n)$. Some of them are somewhat surprising, and show that a generic conformal minimal immersion $M\to\R^n$ has a considerably wild asymptotic behavior. 
The following are some of the properties we are interested in.
We denote by $|\cdot|$ and $\dist(\cdot,\cdot)$ the standard Euclidean norm and distance in $\r^n$, respectively. 
%
%
\begin{definition}\label{de:def}
(a)  Given a Riemannian metric $\ggot$ in $\R^n$, an immersion $u\in {\rm CMI}(M,\R^n)$ is $\ggot$-{\em complete} if the pull-back metric $u^*\ggot$ induced on $M$ by $\ggot$ via $u$ is complete in the classical sense:   $u\circ\gamma$ has infinite $\ggot$-length for every divergent path $\gamma:[0,1)\to M$, that is, $\int_0^1|(u\circ\gamma)'(t)|_\ggot dt=+\infty$. 
The immersion $u$ is {\em complete} if it is $ds^2$-complete, where $ds^2$ denotes the standard Euclidean metric in $\R^n$.

(b) A map between topological spaces is {\em proper} if inverse images of compact subsets are compact. It is {\em almost proper} if the connected components of inverse images of compact subsets are compact.

(c) Denote by $\overline\D=\{z\in\c\colon |z|\le1\}$ the closed unit disc. For $u\in\CMI(M,\R^n)$ and a conformal minimal immersion $v:\overline \D\to\R^n$, we say that $u$ {\em rebuilds} $v$ if for any $\epsilon>0$ 
 there are a smoothly bounded closed disc $D$ in $M$
  and a biholomorphism $\varphi:\overline\D\to D$ such that $|u\circ\varphi-v|<\epsilon$ everywhere on $\overline\D$. We shall say that $u$ {\em rebuilds every conformal minimal disc} if $u$ rebuilds every conformal minimal immersion $\overline \D\to\R^n$ (compare with the notion of {\em universal} minimal surface in \cite[Definition 4.1]{Lopez2014JGA}).

(d) For $u\in {\rm CMI}(M,\R^n)$, we denote
\begin{equation}\label{eq:DP}
	{\rm DP}(u) 
	=\big\{ p\in M\colon u^{-1}(u(p))\setminus\{p\}\neq\varnothing\big\}.
\end{equation}
The immersion $u$ is injective if and only if ${\rm DP}(u)=\varnothing$. We say that $u$ is {\em densely non-injective} if ${\rm DP}(u)$ is a dense subset of $M$.

(e)  Denote by $D_M=\{(p,p)\colon p\in M\}\subset M\times M$ the diagonal of $M\times M$. For $u\in {\rm CMI}(M,\R^n)$, let $\sigma_u\colon M\times M\to \r^n$ be the difference map given by 
\[
	\sigma_u(p,q)=u(p)-u(q),\quad p,q\in M. 
\]
We have that $D_M\subset \sigma_u^{-1}(0)$, and $u$ is injective if and only if $D_M=\sigma_u^{-1}(0)$. We say that $u$ {\em self-intersects nicely} if $0\in\R^n$ is a regular value of the restriction of $\sigma_u$ to $M\times M\setminus D_M$; i.e., $d(\sigma_u)_{(p,q)}:T_{(p,q)}M\times M\to \R^n$ is surjective for all $(p,q)\in \sigma_u^{-1}(0)\setminus D_M$.
\end{definition}

Recall that a subset of a topological space is a $G_\delta$ if it is a countable intersection of open sets; so, dense $G_\delta$ subsets are residual.  For $u\in {\rm CMI}(M,\R^n)$ we denote by $K_u\colon M\to \r$ and $dA_u$  the curvature function and area element  of $u$, respectively. Non-constant continuous   functions $\r^n\to [0,+\infty[$ are called {\em weight functions} on $\r^n$. 

Here is the first result in this paper. 
%
%
\begin{theorem}\label{th:open}
Let $M$ be an open Riemann surface and $n\ge 3$ be an integer. Then the space ${\rm CMI}(M,\R^n)$ is completely metrizable and separable. 

Moreover, given a  Riemannian metric $\ggot$ in $\R^n$, a  conformal minimal disc $v:\overline \d\to \r^n$, a closed   set  $\varnothing\neq Z\subset \r^n$,  a weight function $\rho$ on $\r^n$,   a properly immersed submanifold  $A\subset \R^n$ of codimension $\ge 2$, and a  compact subset $S\subset A$  with empty (relative) interior,   the following subspaces are dense $G_\delta$ subsets of  ${\rm CMI}(M,\R^n)$.
\begin{enumerate}[(i)]
\item The set of $\ggot$-complete immersions.
\item  The set of immersions rebuilding $v$.
\item  The set of immersions $u$  with $\dist(u(M),Z)=0$.
\item The set of immersions $u$ such that  $\sup_M |K_u| (\rho\circ u)=+\infty$.
\item The set of immersions $u$  such that $\int_M |K_u| (\rho\circ u) dA_u=+\infty$.
\item  The set of immersions $u$  such that $\int_M  (\rho\circ u) dA_u=+\infty$.
\item The set of immersions with range in $\R^n\setminus S$.  
\item The set of immersions that self-intersect nicely.
\end{enumerate}
Furthermore, 
\begin{enumerate}[(i)]
\item[(ix)] the set of almost proper immersions is residual in ${\rm CMI}(M,\R^n)$. 
\end{enumerate}
\end{theorem}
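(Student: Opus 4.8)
\emph{Proof idea.} The argument naturally splits into four parts: establishing the Polish structure of the ambient space; showing each set in (i)--(viii) is $G_\delta$; proving density of each; and the separate, more delicate analysis of almost properness.

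\textbf{Step 1: the ambient Polish structure.} I would show that $\CMI(M,\R^n)$ -- which is nonempty, since every open Riemann surface carries a conformal minimal immersion into $\R^n$ -- sits as a $G_\delta$ subset of the Polish space $\Ccal(M,\R^n)$ of continuous maps with the compact-open topology ($M$ being locally compact, $\sigma$-compact and metrizable, this space is Polish). The key intermediate object is the set $\Hcal\subset\Ccal(M,\R^n)$ of harmonic, weakly conformal maps, i.e.\ $\sum_k(\di u_k/\di z)^2\equiv 0$ in every local holomorphic coordinate $z$. This $\Hcal$ is \emph{closed}: local uniform limits preserve the mean value property, hence harmonicity (Weyl's lemma), and by interior Cauchy estimates the first derivatives converge locally uniformly, so the conformality relations pass to the limit; thus $\Hcal$ is Polish. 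Since for a weakly conformal harmonic map being an immersion is equivalent to $\di u/\di z$ never vanishing, one has $\CMI(M,\R^n)=\bigcap_j\{u\in\Hcal:\di u/\di z\neq 0\text{ on }K_j\}$ for a compact exhaustion $K_1\subset K_2\subset\cdots$ of $M$, and each set on the right is open in $\Hcal$ by uniform convergence of first derivatives on the compact $K_j$. Hence $\CMI(M,\R^n)$ is $G_\delta$ in a Polish space, so it is Polish by Alexandrov's theorem, in particular completely metrizable and separable. For later use I record that, by elliptic regularity, convergence in $\CMI(M,\R^n)$ forces local uniform convergence of all derivatives.

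\textbf{Step 2: each set in (i)--(viii) is $G_\delta$.} I would write each as a countable intersection of open sets, by discretizing the defining property along a compact exhaustion of $M$ (and, for (viii), of $M\times M\setminus D_M$). Fixing $p_0\in M$: $\ggot$-completeness is $\bigcap_m\bigcup_j\{u:\dist_{u^*\ggot}(p_0,M\setminus K_j)>m\}$, the inner sets open since $u\mapsto u^*\ggot|_{K_j}$ is $C^0$-continuous; $\dist(u(M),Z)=0$ is $\bigcap_m\bigcup_p\{u:\dist(u(p),Z)<1/m\}$; $\sup_M|K_u|(\rho\circ u)=+\infty$ is $\bigcap_m\bigcup_p\{u:|K_u(p)|\,\rho(u(p))>m\}$; the conditions (v),(vi) are $\bigcap_m\bigcup_j\{u:\int_{K_j}(\cdots)\,dA_u>m\}$; ``$u(M)\subset\R^n\setminus S$'' is $\bigcap_j\{u:u(K_j)\cap S=\varnothing\}$, open because $\{u:u(K_j)\cap S\neq\varnothing\}$ is closed ($S$ closed, $K_j$ compact); ``$u$ rebuilds $v$'' is $\bigcap_m\{u:\exists\,(D,\varphi)\text{ with }|u\circ\varphi-v|<1/m\text{ on }\overline\D\}$, open since the condition constrains $u$ only on the compact $D$; and ``$u$ self-intersects nicely'' is $\bigcap_j\{u:0\text{ is a regular value of }\sigma_u|_{L_j}\}$ for a compact exhaustion $L_j$ of $M\times M\setminus D_M$, open because the rank of $d(\sigma_u)_{(p,q)}$ is lower semicontinuous jointly in $(u,p,q)$ and $L_j$ is compact.

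\textbf{Step 3: density -- the analytic core.} Here minimal surface theory enters, through the Runge--Mergelyan approximation theory for conformal minimal immersions (period-dominating sprays, Calabi--Yau labyrinth techniques, gluing/grafting, and general position, as developed by Alarc\'on, Forstneri\v{c}, L\'opez and collaborators): given $u_0\in\CMI(M,\R^n)$, a compact Runge set $K\subset M$ and $\epsilon>0$, one can produce $\wt u\in\CMI(M,\R^n)$ with $\sup_K|\wt u-u_0|<\epsilon$ and a prescribed extra feature supported in $M\setminus K$. The features I would invoke are: $\ggot$-completeness (so $\wt u$ then lies in every defining open set for (i)); a small smoothly bounded disc $D\subset M\setminus K$ carrying, after uniformization, a copy of $v$ up to $\epsilon$ (ii); a thin finger of $\wt u(M\setminus K)$ reaching within $1/m$ of $Z$ (iii); a small region on which $|K_{\wt u}|(\rho\circ\wt u)$ is as large as desired (iv), respectively a compact piece carrying arbitrarily large $\int|K_{\wt u}|(\rho\circ\wt u)\,dA_{\wt u}$ (v) or $\int(\rho\circ\wt u)\,dA_{\wt u}$ (vi); and transversality of $\wt u$ to the properly immersed submanifold $A$ -- so $\wt u^{-1}(A)$ is discrete, since $\dim M+\dim A\le n$ -- followed by a nudge of its finitely many points off the nowhere-dense compact $S$ (vii). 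Part (viii) I would handle by a parametric transversality argument in the spirit of Thom: a finite-dimensional period-dominating family of conformal minimal immersions through $u_0$ makes $0$ a regular value of $\sigma_u$ over each compact $L_j$, and a Baire argument over $j$ (the sets being open) gives density of the full condition. Combined with Step 2, each set in (i)--(viii) is a dense $G_\delta$.

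\textbf{Step 4: almost properness -- the main obstacle.} For (ix) I would first reformulate: $u$ is almost proper iff for every $R>0$ each connected component of the sublevel set $u^{-1}(\overline B_R)=\{|u|\le R\}$ is compact, equivalently no divergent path in $M$ has $u$-image contained in a ball (using that $u$ is real analytic, so these sublevel sets are semianalytic and hence locally path connected). The difficulty -- and the reason the conclusion is only ``residual'' rather than ``dense $G_\delta$'' -- is that the set $G_R:=\{u:\text{every component of }u^{-1}(\overline B_R)\text{ is compact}\}$ is dense but need not be open, because the topology of the sublevel set can jump under arbitrarily small perturbations outside any fixed compact set (and $G_R$ cannot be open near every point without forcing properness over $B_R$, which the generic immersion violates). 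I would circumvent this by a construction-plus-Baire argument: along a simultaneous exhaustion of $M$ in the source and of $\R^n$ in the target, build a dense family of immersions whose component structure over $\overline B_R$ stabilizes scale by scale -- channeling the surface outside $\overline B_R$ while leaving only compact ``pockets'' inside, and without spoiling the features from Steps 2--3 -- and organize this into a countable intersection of dense open subsets of $\CMI(M,\R^n)$ contained in the almost proper locus. I expect this step, together with the appeal to the deep approximation machinery underlying Step 3, to be the technical heart of the theorem; Steps 1--2 are routine point-set and Baire-category bookkeeping.
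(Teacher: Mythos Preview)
Your Steps 1--3 are essentially the paper's argument, organized the same way: $\CMI(M,\R^n)$ is a $G_\delta$ in the closed subspace of weakly conformal harmonic maps, each target set is exhibited as a countable intersection of open sets along a compact exhaustion, and density is fed by the Runge/Mergelyan theory for conformal minimal immersions. (Your separability argument---inheriting it from the ambient Polish space---is in fact a bit cleaner than the paper's, which proves separability separately via Runge approximation on each $K_j$.)

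The gap is in Step 4. You have correctly diagnosed why the almost-proper locus itself need not be $G_\delta$, but your proposed cure---``stabilize the component structure of $u^{-1}(\overline B_R)$ scale by scale and organize this into a countable intersection of dense open sets''---is not a construction, and in particular you have not named the dense $G_\delta$ subset you intend to land in. The paper's solution is short and fits the \emph{same} template as your Step 2: set
\[
\Lambda_i \;=\; \Big\{\,u\in\CMI(M,\R^n)\ :\ \sup_{j}\,\inf_{bK_j}|u|>i\,\Big\}
\;=\;\bigcup_j\Big\{\,u:\inf_{bK_j}|u|>i\,\Big\},
\]
which is manifestly open. Density of $\Lambda_i$ uses exactly one more approximation lemma (push $|u|$ above a given threshold on $bK_j$ while barely moving $u$ on a smaller $K'\subset K_j$), after which Runge extends to $M$. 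The point you are missing is that $\bigcap_i\Lambda_i$ already sits inside the almost-proper locus: if $\inf_{bK_{n_k}}|u|\to\infty$ along some subsequence, then for each $R$ the set $u^{-1}(\overline B_R)$ eventually avoids $bK_{n_k}$, so every connected component is trapped in some $K_{n_k}$ and hence compact. Thus (ix) is not the technical heart at all---it is another instance of the same $\sup$-over-compacta mechanism you used for (i)--(vi), once you pick the right functional $\Fcal(u,bK_j)=\inf_{bK_j}|u|$.
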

In particular, a generic conformal minimal immersion in  ${\rm CMI}(M,\R^n)$ satisfies all the properties in Theorem \ref{th:open}.
Recall that a subspace of a completely metrizable space is completely metrizable if and only if it is a $G_\delta$; see \cite[Theorem 24.12, p.\ 179]{Willard1970}. Therefore, the subspaces in statements  {\em (i)}--{\em (viii)} in Theorem \ref{th:open} are completely metrizable. 
Observe that if a Riemannian metric $\ggot$ in $\R^n$ is complete then every almost proper immersion $M\to\R^n$ is $\ggot$-complete, but this need not hold true if $\ggot$ is not complete. In view of statement {\em (ix)} in the theorem, the main interest of {\em (i)} is when the given metric $\ggot$ in $\R^n$ is not complete (for instance, if it decays fast at infinity). Moreover, we do not know whether the set of almost proper immersions in ${\rm CMI}(M,\R^n)$ is a $G_\delta$ subset. 

Satisfying a given countable collection of generic properties in a completely metrizable space is again a generic property. Therefore, the following corollary of Theorem \ref{th:open}, which might be understood as the main result in this paper,  holds.
%
%
\begin{corollary}\label{co:open}
If $M$ is an open Riemann surface and $n\ge 3$  is an integer, then the following subspaces of ${\rm CMI}(M,\R^n)$ are dense $G_\delta$ subsets.
\begin{enumerate}[(i)]
\item The set of immersions   which are $\ggot_j$-complete for all $j\in \n$, where $\ggot_j$, $j\in\N$, is any given sequence of Riemannian metrics in $\R^n$.
\item The set of immersions rebuilding every conformal minimal disc.
\item The set of immersions   with dense image.
\item  The set of immersions $u\in {\rm CMI}(M,\R^n)$ having dense image and  unbounded curvature on $u^{-1}(\Omega)$ for every open set $\Omega\subset \r^n$.
\item The set of immersions $u\in {\rm CMI}(M,\R^n)$  having dense image and infinite total curvature on $u^{-1}(\Omega)$ for every open set $\Omega\subset \r^n$.
\item  The set of immersions $u\in {\rm CMI}(M,\R^n)$ having dense image and infinite area on $u^{-1}(\Omega)$ for every open set $\Omega\subset \r^n$.
\item The set of immersions with range in $\R^n\setminus \bigcup_{j\in \n}S_j$ for any given sequence  of   properly immersed submanifolds $A_j\subset\R^n$ of codimension $\ge 2$ and meagre subsets $S_j\subset A_j$, $j\in \n$.  
\end{enumerate}
In particular, the intersection of all these subspaces is still a dense $G_\delta$ subset.  
\end{corollary}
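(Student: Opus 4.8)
The plan is to deduce all seven statements from Theorem \ref{th:open} by a bookkeeping argument inside $\CMI(M,\R^n)$, which is a Baire space because it is completely metrizable by Theorem \ref{th:open}. The only structural fact I use repeatedly is that a countable intersection of dense $G_\delta$ subsets of $\CMI(M,\R^n)$ is again a dense $G_\delta$ subset — the class of $G_\delta$ sets is closed under countable intersections, and a countable intersection of dense open sets in a Baire space is dense. For each item I will exhibit the set in question as exactly such a countable intersection of the dense $G_\delta$ sets furnished by Theorem \ref{th:open}, after fixing a suitable countable family of auxiliary data (metrics, conformal minimal discs, weight functions, or compact sets); the final assertion is then immediate, being the intersection of the seven.

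The set in (i) is $\bigcap_{j\in\N}\Rcal_j$, where $\Rcal_j\subset\CMI(M,\R^n)$ is the dense $G_\delta$ set of $\ggot_j$-complete immersions given by Theorem \ref{th:open}(i). For (ii) I would note that $\CMI(\overline\D,\R^n)$, viewed as a subset of the separable metric space $C(\overline\D,\R^n)$ with the supremum distance, is separable, and fix a countable $\{v_k\}_{k\in\N}\subset\CMI(\overline\D,\R^n)$ dense there. By Theorem \ref{th:open}(ii), each $\Rcal_k:=\{u\in\CMI(M,\R^n):u\text{ rebuilds }v_k\}$ is a dense $G_\delta$, hence so is $\Rcal:=\bigcap_k\Rcal_k$, and I claim $\Rcal$ equals the set in (ii). An immersion rebuilding every conformal minimal disc certainly lies in every $\Rcal_k$; conversely, let $u\in\Rcal$, let $v\colon\overline\D\to\R^n$ be any conformal minimal disc, and let $\epsilon>0$; choosing $k$ with $|v-v_k|<\epsilon/2$ on $\overline\D$ and then, since $u\in\Rcal_k$, a smoothly bounded closed disc $D\subset M$ and a biholomorphism $\varphi\colon\overline\D\to D$ with $|u\circ\varphi-v_k|<\epsilon/2$ on $\overline\D$, the triangle inequality gives $|u\circ\varphi-v|<\epsilon$ on $\overline\D$, so $u$ rebuilds $v$.

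For (iii)--(vi) I would fix a countable dense set $\{q_m\}_{m\in\N}\subset\R^n$ and a countable basis $\{B_m\}_{m\in\N}$ of $\R^n$ consisting of open Euclidean balls with rational centres $c_m$ and radii $r_m$. The set in (iii) equals $\bigcap_m\{u:\dist(u(M),\{q_m\})=0\}$, a countable intersection of dense $G_\delta$ sets by Theorem \ref{th:open}(iii) (with $Z=\{q_m\}$); indeed $u$ lies in this intersection precisely when $\overline{u(M)}\supseteq\{q_m:m\in\N\}$, that is, when $u(M)$ is dense. For (iv)--(vi) put $\rho_m(x):=\max\{0,\,r_m-|x-c_m|\}$, a bounded, compactly supported weight function on $\R^n$ with $\{\rho_m>0\}=B_m$; I claim the sets in (iv), (v), (vi) equal, respectively, $\bigcap_m\{u:\sup_M|K_u|(\rho_m\circ u)=+\infty\}$, $\bigcap_m\{u:\int_M|K_u|(\rho_m\circ u)\,dA_u=+\infty\}$, and $\bigcap_m\{u:\int_M(\rho_m\circ u)\,dA_u=+\infty\}$, each of which is a countable intersection of dense $G_\delta$ sets by Theorem \ref{th:open}(iv)--(vi). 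For the identity it suffices to treat (iv). If $u$ lies in the intersection then, given a nonempty open $\Omega$ and $m$ with $B_m\subseteq\Omega$, from $0\le|K_u|(\rho_m\circ u)\le r_m|K_u|$ on $u^{-1}(B_m)$ and $\equiv0$ elsewhere one gets $\sup_{u^{-1}(\Omega)}|K_u|\ge\sup_{u^{-1}(B_m)}|K_u|\ge r_m^{-1}\sup_M|K_u|(\rho_m\circ u)=+\infty$, and $\sup_M|K_u|(\rho_m\circ u)>0$ forces $u(M)\cap B_m\neq\varnothing$, so $u(M)$ is dense; conversely, if $u$ lies in the set in (iv), then testing its defining property against the concentric ball $B_m'$ of half the radius of $B_m$ — on $u^{-1}(B_m')$ one has $\rho_m\circ u>r_m/2$ — gives $\sup_M|K_u|(\rho_m\circ u)\ge(r_m/2)\sup_{u^{-1}(B_m')}|K_u|=+\infty$. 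The arguments for (v) and (vi) are the same, with $\int_{u^{-1}(\cdot)}|K_u|\,dA_u$, respectively $\int_{u^{-1}(\cdot)}dA_u$, in place of $\sup_{u^{-1}(\cdot)}|K_u|$.

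For (vii): each $A_j$ is a second-countable manifold, hence $\sigma$-compact, and each $S_j$ is meagre in $A_j$, so $S_j=\bigcup_{i\in\N}N_{j,i}$ with every $N_{j,i}$ nowhere dense in $A_j$; thus $\bigcup_j S_j=\bigcup_{j,i}N_{j,i}$ is a \emph{countable} union and the set in (vii) equals $\bigcap_{j,i}\{u:u(M)\cap N_{j,i}=\varnothing\}$. It therefore suffices to prove, for a single nowhere dense $N$ in a properly immersed submanifold $A\subset\R^n$ of codimension $\ge 2$, that $\{u:u(M)\cap N=\varnothing\}$ is a dense $G_\delta$. Writing $\overline N$ for the closure of $N$ in $A$ and $A=\bigcup_l C_l$ with $C_l$ compact, Theorem \ref{th:open}(vii) applied to each $\overline N\cap C_l$ (a compact subset of $A$ with empty relative interior) shows that $\{u:u(M)\cap\overline N=\varnothing\}=\bigcap_l\{u:u(M)\subseteq\R^n\setminus(\overline N\cap C_l)\}$ is a dense $G_\delta$ contained in $\{u:u(M)\cap N=\varnothing\}$, so the latter is dense; that $\{u:u(M)\cap N=\varnothing\}$ is moreover $G_\delta$ is the one point where one must use the perturbation scheme from the proof of Theorem \ref{th:open}(vii) — which exhibits the immersions avoiding such an $N$ as a countable intersection of dense open sets — rather than just its statement. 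Granting this, the set in (vii) is a dense $G_\delta$, and the intersection of the sets in (i)--(vii), being a countable intersection of dense $G_\delta$ subsets of the Baire space $\CMI(M,\R^n)$, is a dense $G_\delta$ subset. Overall I expect the argument to be routine packaging of Theorem \ref{th:open}; the main obstacle is this last $G_\delta$-property in (vii), where the target $\bigcup_j S_j$ is only meagre and not closed, with the localisation of the blow-up conditions in (iv)--(vi) via the bounded, compactly supported weights $\rho_m$ and the separability reduction in (ii) being the other places a small idea is required.
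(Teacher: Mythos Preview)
Your argument is correct and follows essentially the same route as the paper: each item is realised as a countable intersection of the dense $G_\delta$ sets supplied by Theorem~\ref{th:open}, using separability of $\CMI(\overline\D,\R^n)$ for (ii), a countable dense subset of $\R^n$ for (iii), compactly supported weight functions indexed by a countable basis of balls for (iv)--(vi), and a decomposition of each meagre $S_j$ into compact nowhere dense pieces for (vii). The choices of auxiliary data differ only cosmetically (the paper takes weight functions $\rho_j$ with $0\le\rho_j\le 1$ and $\rho_j\equiv 1$ on a smaller concentric ball rather than your cone functions, but the estimates are identical).

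Your flagged concern about (vii) deserves a comment. The paper's argument is exactly what you do up to the point where you stop: it notes that each meagre $S_j\subset A_j$ is \emph{contained in} a countable union of compact sets with empty interior (closures of the nowhere dense pieces, intersected with a compact exhaustion of $A_j$) and applies Theorem~\ref{th:open}(vii) to those. This yields a dense $G_\delta$ set of immersions avoiding a countable union $\bigcup T_{j,k}\supseteq\bigcup_j S_j$, hence contained in the set described in (vii). Strictly speaking, this only shows that the set in (vii) is residual; the paper does not separately address whether the literal set $\{u:u(M)\cap\bigcup_j S_j=\varnothing\}$ is itself a $G_\delta$, and there is nothing further to extract from the proof of Theorem~\ref{th:open}(vii) on this point --- openness of the sets $\Theta_i$ there genuinely relies on compactness of $S$. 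So your instinct to flag this is sound, but you should not expect to close the gap by inspecting that proof; the paper simply elides the residual/$G_\delta$ distinction at this step.
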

The given metrics $\ggot_j$ in Corollary \ref{co:open}{\em (i)}  need not be complete, or conformal or comparable to each other or to the standard Euclidean metric. Roughly speaking, {\em (i)} shows that divergent paths in a generic minimal surface have an extremely wild behavior, and hence such surfaces are {\em complete} in a very strong sense.
 In order to obtain {\em (ii)} we shall use   that the space  $\CMI(\overline\d,\R^n)$ ($n\geq 3$) of all conformal minimal immersions $\overline\D\to\R^n$, endowed with the compact-open topology, is separable; see Claim \ref{cl:disc-sep}. 
 %
%
%
%
%
%
For the proof of statements  {\em (iii)}--{\em (vi)} in Corollary \ref{co:open} we take into account that $\r^n$ is separable and second-countable.
 Concerning {\em (vii)}, recall that a set in a topological space $X$ is {\em meagre} (or of {\em first category}) if its complement is residual in $X$; i.e., if the set is contained in a countable union of closed sets with empty interior. The family of all meagre sets in $X$ is a {\em $\sigma$-ideal}, that is, the empty set is meagre in $X$ and 
all subsets of a meagre set and all countable unions of meagre sets in $X$ are still meagre.
A  consequence of Corollary \ref{co:open}{\em (vii)} is that,  for any given sequence  of   properly immersed submanifolds $A'_j\subset\R^n$, $j\in \n$, of codimension $\ge 3$,  the set of immersions in $ {\rm CMI}(M,\R^n)$  with range in $\R^n\setminus \bigcup_{j\in \n}A'_j$ is a dense $G_\delta$. 
 In particular,  $u(M)\cap(\Q^3\times \r^{n-3})=\varnothing$ holds for a generic conformal minimal immersion $u:M\to\R^n$ (despite $u(M)$ being generically dense in $\r^n$ by Corollary \ref{co:open}{\em (iii)}).
 
In the same spirit as Corollary \ref{co:open}{\em (iv)--(vi)}, we shall also establish the following result in dimension $n=3$.
 \begin{theorem}\label{th:index}
 If $M$ is an open Riemann surface, then the set of immersions $u\in {\rm CMI}(M,\R^3)$ having dense image and infinite index of stability on $u^{-1}(\Omega)$ for every open set $\Omega\subset \r^3$  is a dense $G_\delta$ subset.
 \end{theorem}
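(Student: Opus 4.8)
The plan is to mimic the structure used for Corollary~\ref{co:open}\emph{(iv)--(vi)}, reducing the statement to a countable collection of dense $G_\delta$ conditions indexed by a countable basis of $\r^3$. Fix a countable basis $\{\Omega_k\}_{k\in\n}$ of the topology of $\r^3$ consisting of nonempty open sets. Since a countable intersection of residual sets is residual, and since density of the image is already known to be generic by Corollary~\ref{co:open}\emph{(iii)}, it suffices to show that for each fixed nonempty open $\Omega\subset\r^3$ the set
\[
	\mathcal{S}_\Omega=\big\{u\in\CMI(M,\R^3)\colon \text{the minimal surface }u\text{ has infinite index of stability on }u^{-1}(\Omega)\big\}
\]
is a dense $G_\delta$ in $\CMI(M,\R^3)$; the assertion of the theorem then follows by intersecting the $\mathcal{S}_{\Omega_k}$ with the residual set of immersions with dense image. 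For the $G_\delta$ part I would write $\mathcal{S}_\Omega=\bigcap_{m\in\n}\mathcal{S}_{\Omega,m}$, where $\mathcal{S}_{\Omega,m}$ is the set of $u$ such that there exists a smoothly bounded compact domain $D\subset u^{-1}(\Omega)$ (equivalently, $u(D)\subset\Omega$) on which the Morse index of the second variation (Jacobi) operator is at least $m$, and then argue that each $\mathcal{S}_{\Omega,m}$ is open: if $u$ admits such a $D$ with index $\ge m$, then for $v$ close to $u$ in the compact-open topology on a neighborhood of $D$ (hence close in $C^\infty$ on $D$ by elliptic regularity, as in the arguments used earlier in the paper) the Jacobi form of $v|_D$ is close to that of $u|_D$, and since the Morse index is lower semicontinuous under such perturbations (a negative subspace of the quadratic form persists), $v$ also has index $\ge m$ on $D$; one must also check $v(D)\subset\Omega$, which holds by openness of $\Omega$ and uniform closeness.

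The heart of the matter is density of $\mathcal{S}_\Omega$, and I expect this to be the main obstacle. The strategy is the standard one: given $u_0\in\CMI(M,\R^3)$, a compact set $L\subset M$, and $\epsilon>0$, I want to produce $u\in\mathcal{S}_\Omega$ with $|u-u_0|<\epsilon$ on $L$. First use the already-established genericity of dense image (or a direct application of the approximation/globalization machinery for conformal minimal immersions that the paper relies on) to arrange that $u_0$ may be assumed to have a point $p\in M$ with $u_0(p)\in\Omega$ and $p\notin L$. The key geometric input is that there exist conformal minimal immersions of a disc into $\r^3$ whose image lies in a prescribed small ball and which contain a compact subdomain of arbitrarily large stability index; the model here is a suitably scaled and truncated piece of a highly unstable minimal surface, e.g.\ a large portion of a catenoid or of an Enneper-type surface (which are known to have large index as the domain grows), rescaled to fit inside $\Omega$. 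Then, using the Mergelyan-type / Runge-type approximation theorem for conformal minimal immersions (which underlies the other density statements in Theorem~\ref{th:open}), one glues this highly unstable disc into $u_0$ across a small neck near $p$ while keeping $C^0$-control on $L$: concretely, one approximates on $L\cup\overline D$ the map that equals $u_0$ on $L$ and equals the chosen unstable model on a small disc $D$ around $p$. Because index is detected on a fixed compact subdomain and is stable under small $C^\infty$ perturbations, the resulting $u$ will have index $\ge m$ on a domain mapped into $\Omega$, for the prescribed $m$; letting $m\to\infty$ through the $G_\delta$ intersection gives infinite index.

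A point needing care is that ``infinite index of stability on $u^{-1}(\Omega)$'' should be interpreted correctly: I take it to mean $\sup\{\mathrm{Ind}(u|_D): D\subset u^{-1}(\Omega)\text{ a smoothly bounded compact domain}\}=+\infty$, equivalently the index of the open (possibly non-compact) surface $u|_{u^{-1}(\Omega)}$, defined as the supremum of indices over compact subdomains, is infinite. With this reading, the inclusion $D\subset u^{-1}(\Omega)$ is an open condition on $u$ for fixed $D$ (it just says $u(D)\subset\Omega$), which is what makes the $\mathcal{S}_{\Omega,m}$ open. The other delicate point is ensuring that the unstable model disc genuinely has large index \emph{as an immersed minimal surface in $\r^3$} and that largeness of index is inherited by a $C^\infty$-close immersion: both follow from the variational characterization of the index as the maximal dimension of a subspace of $C^\infty_c$ on which the second variation is negative definite, together with continuity of the second variation functional in the $C^1$ norm of the immersion on a fixed compact domain — so that a negative-definite subspace for the model persists for all nearby immersions. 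Granting the approximation theorem for conformal minimal immersions stated/used earlier in the paper and the existence of arbitrarily-high-index minimal discs in $\r^3$ (classical, via catenoids/Enneper surfaces), the proof assembles exactly as in the proofs of parts \emph{(iv)--(vi)} of Corollary~\ref{co:open}.
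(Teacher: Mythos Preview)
Your overall architecture matches the paper's: reduce to a countable family of open balls, for each ball write the ``index $\ge m$ on a compact subdomain mapped into the ball'' condition as an open set (by lower semicontinuity of the Morse index under $C^\infty$-close perturbations, furnished by Cauchy estimates), and prove density by gluing in a highly unstable minimal disc via the Runge theorem for conformal minimal immersions. The paper organises this through the exhaustion $K_j$ and the auxiliary function $\Ical_B(u,j)$, but the logical content is the same.

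There is, however, a concrete error in your choice of model. The catenoid has Morse index exactly $1$, and so does the classical Enneper surface; taking larger compact pieces does \emph{not} make the index grow (index is scale-invariant, and the full surfaces have finite index). So ``a large portion of a catenoid \ldots\ which are known to have large index as the domain grows'' is false, and with those models your density step fails. The paper instead uses pieces of a helicoid whose axis meets $B$: the helicoid has infinite index, compact simply connected pieces winding many times around the axis have index as large as you like, and such pieces can be taken inside any prescribed ball. With this substitution your argument goes through exactly as written.
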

 Recall that the {\em index of stability} of an immersion  $u\in {\rm CMI}(M,\R^3)$  is the index of its Jacobi operator $\Delta_u-2K_u$, where $\Delta_u$ is the Laplace operator of $u^*(ds^2)$; see, e.g., \cite[Sec.\ 2.8]{MeeksPerez2012} for a brief introduction to stability of minimal surfaces.

The set of proper conformal minimal immersions in ${\rm CMI}(M,\R^n)$ is  known to be dense (see \cite[Theorem 7.1]{AlarconForstnericLopez2016MZ} or \cite[Theorem 3.10.3]{AlarconForstnericLopez2021Book}), but it fails to be residual. Indeed, since a conformal minimal immersion $M\to\R^n$ with dense image is not proper, the following is an immediate consequence of Corollary \ref{co:open}{\em (iii)}.
%
%
\begin{corollary}\label{co:proper}
Let $M$ be an open Riemann surface and $n\ge 3$ be an integer. Then the set of proper conformal minimal immersions is meagre in ${\rm CMI}(M,\R^n)$. 
\end{corollary}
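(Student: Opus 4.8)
The plan is to derive Corollary \ref{co:proper} directly from Corollary \ref{co:open}(iii), via the elementary fact that a proper conformal minimal immersion $M\to\R^n$ cannot have dense image. Write $\mathcal D$ for the set of immersions in ${\rm CMI}(M,\R^n)$ with dense image; by Corollary \ref{co:open}(iii), $\mathcal D$ is a dense $G_\delta$, hence residual, and therefore its complement ${\rm CMI}(M,\R^n)\setminus\mathcal D$ is meagre. It then suffices to check that the set $\mathcal P$ of proper conformal minimal immersions $M\to\R^n$ satisfies $\mathcal P\cap\mathcal D=\varnothing$, since then $\mathcal P$ is contained in a meagre set and hence is itself meagre (subsets of meagre sets being meagre).

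To see $\mathcal P\cap\mathcal D=\varnothing$, fix $u\in\mathcal P$. First, $u(M)$ is closed in $\R^n$: if $u(p_k)\to y\in\R^n$, then $K:=\{u(p_k):k\in\N\}\cup\{y\}$ is compact, so $u^{-1}(K)$ is compact and contains the sequence $(p_k)$; passing to a subsequence $p_{k_j}\to p\in M$ and using continuity of $u$, we get $u(p)=y\in u(M)$. (Equivalently, a proper continuous map into the locally compact Hausdorff space $\R^n$ is closed.) Second, $u(M)$ has empty interior in $\R^n$: since $u$ is an immersion it is locally an embedding, so $M$ is covered by countably many open sets on each of which $u$ is an embedding onto a two-dimensional submanifold of $\R^n$; for $n\ge 3$ each such submanifold — and hence the countable union $u(M)$ — is nowhere dense (alternatively, $u(M)$ is $\sigma$-compact with vanishing $n$-dimensional Lebesgue measure, so it has empty interior by Baire). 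A closed subset of $\R^n$ with empty interior is not dense, so $u\notin\mathcal D$. This proves $\mathcal P\subset{\rm CMI}(M,\R^n)\setminus\mathcal D$, and the corollary follows. No genuine obstacle arises here; the only mildly delicate point is the topological fact that proper maps into $\R^n$ are closed, which is disposed of by the sequential argument above.
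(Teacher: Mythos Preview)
Your proof is correct and follows exactly the approach indicated in the paper, which simply observes (in the sentence preceding the corollary) that a conformal minimal immersion with dense image cannot be proper, so the corollary is immediate from Corollary~\ref{co:open}{\em (iii)}. You have merely supplied the routine details (properness implies closed image, and an immersed surface in $\R^n$ with $n\ge 3$ has empty interior) that the paper leaves to the reader.
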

Recall that meagre sets in a completely metrizable space have empty interior.
So, in a certain topological sense, the meagre sets in such a space can be considered {\em small} and even {\em negligible}. In particular, a set in a completely metrizable space cannot be both meagre and residual.

The following result is a consequence of Theorem \ref{th:open}{\em (ii)} and {\em (viii)}, and the fact that conformal minimal immersions $M\to \r^n$, $n=3,4$, rebuilding every conformal minimal disc are densely non-injective (see Definition \ref{de:def} {\rm (d)} and Claim \ref{cl:DP}).  Recall that an immersion $u\in {\rm CMI}(M,\R^4)$ has {\em simple double points} if for any $p,q\in M$, $p\neq q$, with $u(p)=u(q)$, the tangent planes $du_p(T_pM)$ and $du_q(T_qM)$ intersect only at $0\in \r^4$, and there are no triple intersections.
%
%
\begin{corollary}\label{co:DP}
Let $M$ be an open Riemann surface. Then the following hold.
\begin{enumerate}[(i)]
\item If $n\ge 5$ then the set of injective immersions is a dense $G_\delta$ set in ${\rm CMI}(M,\R^n)$.
\item The set of immersions $u\in {\rm CMI}(M,\R^4)$ with simple double points is a dense $G_\delta$ subset, and the set of those $u$ such that, in addition, ${\rm DP}(u)$ is a countable dense set in $M$ is residual ${\rm CMI}(M,\R^4)$.
\item A generic immersion $u\in {\rm CMI}(M,\R^3)$   satisfies that ${\rm DP}(u)$ is a dense countable union of properly immersed real curves  in $M$.  
\end{enumerate}
\end{corollary}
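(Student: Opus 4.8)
The plan is to derive all three parts from Theorem~\ref{th:open}\emph{(viii)} (the immersions that self-intersect nicely form a dense $G_\delta$), Corollary~\ref{co:open}\emph{(ii)} (those rebuilding every conformal minimal disc form a dense $G_\delta$), and Claim~\ref{cl:DP} (the latter are densely non-injective when $n\in\{3,4\}$), the mechanism in each case being a dimension count for the difference map. For $u\in\CMI(M,\R^n)$ put $\Gamma_u:=\sigma_u^{-1}(0)\setminus D_M$, so that ${\rm DP}(u)=\pi_1(\Gamma_u)$, where $\pi_1\colon M\times M\to M$ is the first projection. First I would record two facts valid whenever $u$ self-intersects nicely. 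Since an immersion is locally injective, $\sigma_u$ does not vanish on a punctured neighbourhood of the diagonal $D_M$; hence $\Gamma_u$ does not accumulate on $D_M$, and, being a regular level set of $\sigma_u$ over $M\times M\setminus D_M$, it is a \emph{properly embedded} submanifold of $M\times M$ of dimension $4-n$ (note $\dim_\R(M\times M)=4$). Moreover, at $(p,q)\in\Gamma_u$ the differential $d(\sigma_u)_{(p,q)}$ is injective on the vertical subspace $\{0\}\oplus T_qM$, since it sends $(0,w)$ to $-du_q(w)$ and $u$ is an immersion; hence $T_{(p,q)}\Gamma_u=\ker d(\sigma_u)_{(p,q)}$ meets that vertical subspace trivially, so $\pi_1$ restricts to an immersion on $\Gamma_u$.

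For part \emph{(i)}, if $n\ge5$ then no linear map $\R^4\to\R^n$ is surjective, so an immersion that self-intersects nicely must have $\Gamma_u=\varnothing$, i.e.\ it is injective; conversely an injective immersion has $\Gamma_u=\varnothing$ and self-intersects nicely vacuously. Thus for $n\ge5$ the set of injective immersions coincides with the set of immersions that self-intersect nicely, which is a dense $G_\delta$ by Theorem~\ref{th:open}\emph{(viii)}.

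For part \emph{(ii)} I work with $n=4$. Surjectivity of $d(\sigma_u)_{(p,q)}$ at a point of $\Gamma_u$ means $du_p(T_pM)+du_q(T_qM)=\R^4$, equivalently $du_p(T_pM)\cap du_q(T_qM)=\{0\}$; so self-intersecting nicely is exactly the tangent-plane condition in the definition of simple double points, and it also makes $\sigma_u$ a local diffeomorphism near each point of $\Gamma_u$, whence $\Gamma_u$ is discrete and, being properly embedded in the second-countable manifold $M\times M$, at most countable. For the remaining condition, the absence of triple points, I would run the transversality argument behind Theorem~\ref{th:open}\emph{(viii)} for the map $(p,q,r)\mapsto\bigl(u(p)-u(q),\,u(p)-u(r)\bigr)$ on the $6$-dimensional manifold of pairwise distinct triples in $M\times M\times M$ with values in $\R^4\times\R^4$: generically $0$ is a regular value of this restriction, which since $6<8$ is possible only if its zero set is empty; so the immersions with no triple points form a dense $G_\delta$, and intersecting with Theorem~\ref{th:open}\emph{(viii)} gives the same for the immersions with simple double points. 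For the last assertion, intersect this dense $G_\delta$ with the dense $G_\delta$ of immersions rebuilding every conformal minimal disc (Corollary~\ref{co:open}\emph{(ii)}); by Claim~\ref{cl:DP} these have ${\rm DP}(u)$ dense in $M$, while the preceding remarks show ${\rm DP}(u)=\pi_1(\Gamma_u)$ is countable; hence this dense $G_\delta$ sits inside the set of immersions with simple double points and ${\rm DP}(u)$ a countable dense subset of $M$, which is therefore residual.

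For part \emph{(iii)}, with $n=3$ the set $\Gamma_u$ is a properly embedded $1$-submanifold of $M\times M$ — a countable disjoint union of properly embedded arcs and circles — on which $\pi_1$ restricts to an immersion, so ${\rm DP}(u)=\pi_1(\Gamma_u)$ is a countable union of properly immersed real curves in $M$; intersecting the dense $G_\delta$ of immersions that self-intersect nicely with that of immersions rebuilding every conformal minimal disc, and using Claim~\ref{cl:DP} for the density of ${\rm DP}(u)$, produces a dense $G_\delta$ of immersions with the stated property, so a generic immersion has it. The genuinely new ingredient beyond Theorem~\ref{th:open} is the triple-point transversality used in part~\emph{(ii)}, which I expect to be amenable to the same transversality scheme established there. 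The point I would treat most carefully is the claim in part~\emph{(iii)} that each projected curve $\pi_1(D)$, for $D$ a component of $\Gamma_u$, is \emph{properly} immersed in $M$ and not merely immersed: since $\pi_1$ is not a proper map on all of $M\times M$, this requires, beyond the closedness of $\Gamma_u$ in $M\times M$ (hence the local injectivity of $u$ near the diagonal), a closer analysis of the ends of the arc components of $\Gamma_u$, and I regard it as the main obstacle.
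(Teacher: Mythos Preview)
Your approach is essentially the paper's: part \emph{(i)} is identical, and parts \emph{(ii)} and \emph{(iii)} combine Theorem~\ref{th:open}\emph{(viii)}, Corollary~\ref{co:open}\emph{(ii)}, and Claim~\ref{cl:DP} in the same way, with the same dimension count on $\Gamma_u=\sigma_u^{-1}(0)\setminus D_M$. The one notable difference is how the no-triple-points clause in \emph{(ii)} is handled: the paper simply invokes \cite[Theorem 3.4.1(c)]{AlarconForstnericLopez2021Book} (and the scheme of the proof of Theorem~\ref{th:open}\emph{(viii)}) to get that immersions with simple double points form a dense $G_\delta$, whereas you propose to run the transversality argument for $(p,q,r)\mapsto(u(p)-u(q),\,u(p)-u(r))$ separately. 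Both routes are valid; the paper's just packages the triple-point step into an existing reference.

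Your flagged concern about properness of the projected curves in \emph{(iii)} is well taken. The paper does not address it head-on either: its argument works compact-by-compact, noting that for each compact $K\subset M$ the set $\{p\in K:(u^{-1}(u(p))\setminus\{p\})\cap K\neq\varnothing\}=\pi_1\bigl(\Gamma_u\cap(K\times K)\bigr)$ is a finite union of regular arcs and boundary points (since $\Gamma_u\cap(K\times K)$ is compact), and then takes a countable union over an exhaustion. Read this way, ${\rm DP}(u)$ is exhibited as a countable union of compact immersed arcs, which appears to be the intended meaning; the stronger claim that each connected component of $\Gamma_u$ projects \emph{properly} to $M$ under $\pi_1$ is, as you correctly suspect, neither proved nor obviously true.
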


The subspaces of ${\rm CMI}(M,\R^n)$ in Corollary \ref{co:open}{\em (iii)} and Theorem \ref{th:open}{\em (ix)} are known to be dense; see \cite[Theorem 1.1]{AlarconCastro-Infantes2018GT} and \cite[Theorem 7.1]{AlarconForstnericLopez2016MZ} (see also \cite[Theorem 3.9.1 and 3.10.3]{AlarconForstnericLopez2021Book}). The subspace in Theorem \ref{th:open}{\em (i)} is also known to be dense in ${\rm CMI}(M,\R^n)$ when $\ggot$ is the standard Euclidean metric $ds^2$ in $\R^n$ (see  \cite[Theorem 3.9.1 {\em i)}]{AlarconForstnericLopez2021Book}); it is not difficult to modify the proof in the mentioned source to make it work for an arbitrary metric $\ggot$ in $\R^n$, though. The proofs of these density results follow from some intricate lemmas that are recursively applied in involved inductive procedures. Our method of proof of Theorem \ref{th:open} is much simpler, relying on a single application of the same lemmas together with the Baire category theorem, so without requiring of any induction, and gives the stronger result that the concerned subspaces are not only dense but residual in ${\rm CMI}(M,\R^n)$. The fact that the subspaces in Corollary \ref{co:open}, apart from the one in  {\em (iii)}, and Theorem \ref{th:index} are dense in ${\rm CMI}(M,\R^n)$  is new; in fact, the referred subspaces were not known to be nonempty until now.

In conclusion, it is safe to claim that the shape of {\em almost all} minimal surfaces in Euclidean space is extremely complicated.


\section{Proof of Theorem \ref{th:open}}\label{sec:genericCMI}

\noindent
Throughout the proof we let 
\begin{equation}\label{eq:exhaustion}
	\varnothing\neq K_1\Subset K_2\Subset\cdots\subset \bigcup_{j\in\n}K_j=M
\end{equation}
be an exhaustion of $M$ by smoothly bounded Runge compact domains.\footnote{A compact set $K$ in an open Riemann surface $M$ is {\em Runge} (or {\em holomorphically convex}) if  $M\setminus K$ has no relatively compact connected components in $M$. It is customary to call a nonempty set in a topological space a {\em compact domain} if it is compact and is the closure of a connected open set. Lacking a better term, in this paper we also refer to the union of finitely many mutually disjoint compact domains as a compact domain.}

Let us first prove the first assertion in the theorem.
%
%
\begin{claim}\label{cl:metrizable}
Let $M$ be an open Riemann surface and $n\ge 3$ be an integer. Then the space ${\rm CMI}(M,\R^n)$ is completely metrizable.
\end{claim}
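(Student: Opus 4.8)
The plan is to exhibit $\CMI(M,\R^n)$ as a $G_\delta$ subset of a completely metrizable space and then invoke the characterization \cite[Theorem 24.12, p.\ 179]{Willard1970}: a subspace of a completely metrizable space is completely metrizable if and only if it is a $G_\delta$.

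First I would recall that the space $C(M,\R^n)$ of all continuous maps $M\to\R^n$ with the compact-open topology — which, since $M$ is locally compact and $\sigma$-compact, coincides with the topology of uniform convergence on compacta — is completely metrizable: using the exhaustion \eqref{eq:exhaustion}, the metric
\[
	d(f,g)=\sum_{j\in\n}2^{-j}\,\frac{m_j(f,g)}{1+m_j(f,g)},\qquad\text{where}\quad m_j(f,g)=\max_{K_j}|f-g|,
\]
induces the compact-open topology and is complete. Next I would show that the subset $\Hcal\subset C(M,\R^n)$ of \emph{conformal harmonic} maps $M\to\R^n$ — those $u$ that are harmonic and satisfy $\sum_{k=1}^n(\di u_k/\di z)^2\equiv 0$ in every local holomorphic coordinate $z$ (two conditions that are manifestly independent of the choice of $z$) — is closed in $C(M,\R^n)$, and hence itself completely metrizable. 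Indeed, harmonicity is local and is preserved under locally uniform limits (via the mean-value property, or because harmonic functions are locally real parts of holomorphic functions, to which Weierstrass' theorem applies); and by the interior Cauchy-type estimates the $\di/\di z$-derivatives of a locally uniformly convergent sequence of harmonic maps converge locally uniformly, so one may pass to the limit in the quadratic relation $\sum_k(\di (u_j)_k/\di z)^2\equiv 0$.

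Now, recalling that a conformal harmonic map is a conformal minimal immersion exactly away from the zero set of $\di u/\di z$, and that for a conformal harmonic map $\di u/\di z$ vanishes at $p$ precisely when $du_p$ fails to have rank $2$ (since $\langle\di u/\di z,\di u/\di z\rangle\equiv 0$ forces $\mathrm{Re}(\di u/\di z)$ and $\mathrm{Im}(\di u/\di z)$ to be orthogonal of equal length), one has
\[
	\CMI(M,\R^n)=\bigl\{u\in\Hcal:\ \di u/\di z\neq 0 \text{ everywhere on } M\bigr\}=\bigcap_{j\in\n}U_j,\qquad U_j:=\bigl\{u\in\Hcal:\ \di u/\di z\neq 0 \text{ on } K_j\bigr\},
\]
the vanishing locus of $\di u/\di z$ being intrinsic because $|\di u/\di z|^2$ changes by a positive conformal factor under coordinate changes. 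It then suffices to check that each $U_j$ is open in $\Hcal$: for $u\in U_j$ the continuous positive function $|\di u/\di z|^2$ is bounded below by some $\delta>0$ on the compact set $K_j$, and by the interior estimates for harmonic functions applied on the slightly larger compact domain $K_{j+1}$ there is $\epsilon>0$ so that $m_{j+1}(v,u)<\epsilon$ implies $|\di v/\di z-\di u/\di z|<\tfrac12\sqrt\delta$ on $K_j$, hence $v\in U_j$. Thus $\CMI(M,\R^n)$ is a $G_\delta$ in the completely metrizable space $\Hcal$, and the claim follows.

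I do not expect a serious obstacle here: every ingredient — the complete metric on $C(M,\R^n)$, the closedness of the conformal–harmonic equations under locally uniform limits, and the openness of the non-degeneracy (immersion) condition — is standard. The only points requiring a little care are the coordinate-independence on the Riemann surface $M$ of the conditions "$u$ harmonic", "$\sum_k(\di u_k/\di z)^2=0$", and "$\di u/\di z\neq 0$", and the use of interior elliptic estimates to transfer convergence of maps to convergence of their $\di/\di z$-derivatives.
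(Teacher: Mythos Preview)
Your proposal is correct and follows essentially the same approach as the paper: first show that the space $\Hcal$ of conformal harmonic maps (the paper calls it ${\rm CHM}(M,\R^n)$) is closed in $C(M,\R^n)$, hence completely metrizable, and then exhibit $\CMI(M,\R^n)$ as the $G_\delta$ set $\bigcap_j U_j$ in $\Hcal$ with $U_j$ open by Cauchy estimates. Your treatment is slightly more detailed (explicit metric on $C(M,\R^n)$, remarks on coordinate independence), but the structure and key steps are identical to the paper's.
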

\begin{proof} 
Denote by $\Ccal(M,\R^n)$ the space of all continuous maps $M\to\R^n$ endowed with the compact-open topology.  
Since $M$ is hemicompact\footnote{A topological space $X$ is {\em hemicompact} if there is a countable family of compact sets in $X$ such that every compact set in $X$ is contained in one of those in the family.} 
and $\R^n$ is complete with the Euclidean metric,  
we have that $\Ccal(M,\R^n)$ is completely metrizable (see Arens \cite{Arens1946}; see also, e.g., 
 \cite[p.\ 100]{McCoyNtantu1988}). 
Recall that a map $u=(u_1,\ldots,u_n):M\to\R^n$ lies in ${\rm CMI}(M,\R^n)$ if and only if $u$ is a harmonic map and its complex derivative $\partial u$ (i.e., the $(1,0)$-part of the exterior derivative ${\rm d}u$ of $u$) satisfies $\sum_{i=1}^n (\partial u_i)^2=0$ and $\partial u\neq 0$ everywhere on $M$ (see, e.g.,  \cite[Theorem 2.3.1]{AlarconForstnericLopez2021Book}).
Denote by ${\rm CHM}(M,\R^n)$ the subspace of $\Ccal(M,\R^n)$ consisting of all 
harmonic maps $u:M\to\R^n$ satisfying $\sum_{i=1}^n (\partial u_i)^2=0$ everywhere on $M$. By Harnack's theorem ${\rm CHM}(M,\R^n)$ is a closed subspace of $\Ccal(M,\R^n)$, hence ${\rm CHM}(M,\R^n)$ is completely metrizable as well. Recall now that a subspace of a completely metrizable space is completely metrizable if and only if it is a $G_\delta$ set (see, e.g., \cite[Theorem 24.12, p.\ 179]{Willard1970} or \cite[Theorems 4.3.23 and 4.3.24, p.\ 274]{Engelking1989}). Since 
\[
	{\rm CMI}(M,\R^n)=\big\{u\in{\rm CHM}(M,\R^n)\colon \partial u(p)\neq 0  \text{ for all } p\in M\big\}
\]
is a subspace of ${\rm CHM}(M,\R^n)$, to complete the proof it remains to see that ${\rm CMI}(M,\R^n)$ is a $G_\delta$ set in ${\rm CHM}(M,\R^n)$; i.e., it can be written as a countable intersection of open sets. For that, it is then clear that ${\rm CMI}(M,\R^n)=\bigcap_{j\in\N} U_j$,
where
\[
	U_j=\big\{u\in {\rm CHM}(M,\R^n)\colon\partial u(p)\neq 0  \text{ for all } p\in K_j\big\},\quad j\in\N.
\]
(See \eqref{eq:exhaustion}.)
To finish, just note that $U_j$ is an open subset in $ {\rm CHM}(M,\R^n)$ for all  $j\in\n$ by Cauchy estimates. 
\end{proof}
\begin{claim}\label{cl:disc-sep}
Let $M$ be an open Riemann surface, $K$ be a smoothly bounded compact domain in $M$, and $n\geq 3$ an integer. Then the space   $\CMI(K,\R^n)$ of all conformal minimal immersions $K\to \r^n$ (i.e., extending as conformal minimal immersion to an unspecified open neighborhood of $K$),  endowed with the compact-open topology,   is separable.   The same holds for    $\CMI(M,\R^n)$.
\end{claim}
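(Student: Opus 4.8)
The plan is to prove separability of $\CMI(K,\R^n)$ first, and then deduce the statement for $\CMI(M,\R^n)$ by exhausting $M$ and using that a countable union of separable subspaces (suitably matched up) gives separability. For the compact-domain case, I would exploit the Weierstrass-type representation: a conformal minimal immersion on $K$ is determined (up to an additive constant, which ranges over the separable space $\R^n$) by its complex derivative $\partial u$, which is a nowhere-vanishing holomorphic $1$-form valued in the null quadric $\ Acal = \{\bz\in\C^n : z_1^2+\cdots+z_n^2=0\}$. So the first step is to reduce separability of $\CMI(K,\R^n)$ to separability of the space of holomorphic maps $K\to\C^n$ with values in the punctured null quadric $\Acal_*=\Acal\setminus\{0\}$ whose periods over a homology basis of $K$ are purely imaginary (this last condition is what makes the $1$-form integrate to a well-defined $\R^n$-valued map).

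The second step is to produce a countable dense set. Since $K$ is a smoothly bounded compact domain in an open Riemann surface, it has a neighborhood basis of Runge domains, and holomorphic functions on $K$ are uniformly approximable by rational functions with poles off $K$ (Runge), whose coefficients can be taken in $\Q+i\Q$; this gives a countable dense subset of $\Ocal(K,\C)$, hence of $\Ocal(K,\C^n)$. The difficulty is that a generic such rational approximant will neither land in the null quadric nor have imaginary periods. Here I would invoke the approximation machinery already available in this circle of ideas — the same lemmas underpinning the density results of \cite{AlarconForstnericLopez2016MZ, AlarconForstnericLopez2021Book} cited in the introduction — which allow one to perturb a holomorphic map so as to force it into $\Acal_*$ and to correct its periods, with arbitrarily small change in the sup norm. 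Concretely, one takes a $\Q$-rational vector of holomorphic functions close to a given $\partial u$, applies the null-quadric-correction together with a period-vanishing correction (a finite-dimensional adjustment depending on a spray of bounded size), and observes that the corrected map can itself be taken from a countable family, since the finitely many correction parameters live in a separable space and can be chosen rational. Integrating and adding a rational constant vector then yields a countable dense subset of $\CMI(K,\R^n)$.

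The third step handles $\CMI(M,\R^n)$. Using the exhaustion \eqref{eq:exhaustion} by smoothly bounded Runge compact domains $K_j$, the compact-open topology on $\CMI(M,\R^n)$ is the initial topology for the restriction maps $\CMI(M,\R^n)\to\CMI(K_j,\R^n)$ and is metrizable (indeed completely metrizable, by Claim \ref{cl:metrizable}). A metrizable space is separable iff it is second countable, so it suffices to exhibit a countable base; alternatively, since each $\CMI(M,\R^n)$-element restricts into the separable $\CMI(K_j,\R^n)$, and by the Runge-type approximation theorem for conformal minimal immersions (\cite[Theorem 3.something]{AlarconForstnericLopez2021Book}) every $u\in\CMI(K_j,\R^n)$ can be approximated uniformly on $K_j$ by elements of $\CMI(M,\R^n)$, I can transport a countable dense subset of each $\CMI(K_j,\R^n)$ to a countable subset of $\CMI(M,\R^n)$ whose union over $j$ is dense: given $u\in\CMI(M,\R^n)$, a compact $L\subset M$ and $\epsilon>0$, pick $K_j\supset L$, approximate $u|_{K_j}$ within $\epsilon/2$ by an element of the countable dense subset of $\CMI(K_j,\R^n)$, then approximate that within $\epsilon/2$ on $K_j$ by a global element from the transported countable family. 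The main obstacle is the second step — arranging that the null-quadric constraint and the period constraints can be satisfied while keeping the approximants inside an explicitly countable set — and for this I would lean directly on the perturbation and period-dominating-spray lemmas already in the literature rather than reproving them here.
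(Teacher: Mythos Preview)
Your approach for the compact-domain case is vastly more elaborate than needed, and the ``main obstacle'' you identify is self-inflicted. The paper's argument is essentially one line: $\Ccal(K,\R^n)$ with the sup norm is separable (polynomials with rational coefficients are dense, or cite Riesz's theorem as the paper does), and \emph{every subspace of a separable metric space is separable}. Hence $\CMI(K,\R^n)\subset\Ccal(K,\R^n)$ is separable --- no Weierstrass representation, no null-quadric correction, no period sprays.

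Your detour through $\partial u$ and explicit countable families runs into exactly the difficulty you flag: the correction procedures (projecting into $\Acal_*$, adjusting real periods) are applied relative to a given target $u$, so they do not send a fixed countable input set to a fixed countable output set independent of $u$. You could repair this by observing that the space of null-quadric-valued holomorphic $1$-forms with imaginary periods is itself a subspace of the separable metric space $\Ccal(K,\C^n)$ and therefore separable --- but that is just the paper's argument transplanted one level down, after which the explicit construction becomes superfluous.

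For $\CMI(M,\R^n)$, your third step coincides with the paper's: lift countable dense subsets of each $\CMI(K_j,\R^n)$ into $\CMI(M,\R^n)$ via the Runge theorem for conformal minimal immersions (Theorem~\ref{th:Runge}), and take the union over $j$.
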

\begin{proof}
Let $\Ccal(K,\r^n)$ denote the space of all continuous maps $K\to\R^n$ endowed with the compact-open topology. Since $K$ is compact Hausdorff and $\R^n$ is a metric space, this topology coincides with the metric topology of the maximum norm (see, e.g., \cite[Theorem 2.12, p.\ 440]{Bredon1993}). Note that $\Ccal(K,\r^n)$ is separable by Riesz's Theorem; see, e.g., \cite[p.\ 251]{RoydenFitzpatrick2010}, and take into account that the product of finitely many separable metric spaces is separable.
Since every subspace of a separable metric space is separable (see, e.g., \cite[Proposition 26, p.\ 204]{RoydenFitzpatrick2010} or  \cite[16G.1]{Willard1970}),  we have that   $\CMI(K,\R^n)\subset \Ccal(K,\r^n)$   is separable too.  
 
 For the second assertion, note that $\CMI(K_j,\R^n)$ is separable for all $j\in \n$ (see \eqref{eq:exhaustion}). Since the Runge theorem for conformal minimal immersions in  \cite[Theorem 3.6.1]{AlarconForstnericLopez2021Book} (see Theorem \ref{th:Runge} below) enables us to uniformly approximate every element in $\CMI(K_j,\R^n)$ by a sequence in $\CMI(M,\R^n)$, the latter space is separable as well.
 \end{proof}

We shall next  focus on statements {\em (i)}--{\em (vi)}. Their proofs follow a common logical pattern using the following immediate consequence of the Baire Category Theorem. For a compact topological space $K$ and  a continuous map $f\colon K\to \r^n$ we denote by $\|f\|_K=\max\{|f(p)|\colon p\in K\}$.
\begin{claim}\label{cl:patron}
Let $M$ be an open Riemann surface and $n\geq 3$ be an integer. 
Let $E$ be a family of compact subsets of $M$  and  
$\Fcal\colon  {\rm CMI}(M,\R^n) \times E\to [0,+\infty]$ 
  be a function satisfying the following conditions:
  \begin{enumerate}[{\rm (A)}]
  \item   $\Fcal(\cdot,C)\colon   {\rm CMI}(M,\R^n) \to [0,+\infty]$ is continuous for all $C\in E$, where $ [0,+\infty]$ has the topology of the extended real line.
  \item  For every $i\in \n$  the set 
\begin{equation}\label{eq:Lambda-i}
\Lambda_i=\{u\in   {\rm CMI}(M,\R^n) \colon \sup_E  \Fcal(u,\cdot)>i\}
\end{equation}
is dense in $  {\rm CMI}(M,\R^n)$, that is, for any $u\in   {\rm CMI}(M,\R^n)$, any compact set $K\subset M$, and any $\epsilon>0$, there exists $\tilde u\in \Lambda_i$ with $\|\tilde u-u\|_K<\epsilon$.
  \end{enumerate}
  Then the set $\bigcap _{i\in \n} \Lambda_i$ is a dense $G_\delta$ subset in $  {\rm CMI}(M,\R^n)$. 
  \end{claim}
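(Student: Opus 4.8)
The plan is to prove Claim \ref{cl:patron} by the following standard Baire-category argument. For each $i\in\n$, I want to show that $\Lambda_i$ is a $G_\delta$ subset of ${\rm CMI}(M,\R^n)$; combined with hypothesis (B), which asserts that $\Lambda_i$ is dense, and with the fact that ${\rm CMI}(M,\R^n)$ is completely metrizable (Claim \ref{cl:metrizable}), hence a Baire space, the countable intersection $\bigcap_{i\in\n}\Lambda_i$ will then be a dense $G_\delta$ subset. So the only real content is establishing that each $\Lambda_i$ is $G_\delta$.

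To see that $\Lambda_i$ is $G_\delta$, I first rewrite the condition $\sup_E\Fcal(u,\cdot)>i$ in a countable way. Since $\Fcal$ takes values in $[0,+\infty]$, we have
\[
	\sup_{C\in E}\Fcal(u,C)>i \quad\Longleftrightarrow\quad \text{there exists }C\in E\text{ with }\Fcal(u,C)>i,
\]
so that $\Lambda_i=\bigcup_{C\in E}\{u\colon \Fcal(u,C)>i\}$. Each set $\{u\in{\rm CMI}(M,\R^n)\colon \Fcal(u,C)>i\}$ is open, because $\Fcal(\cdot,C)$ is continuous by hypothesis (A) and $(i,+\infty]$ is open in the extended real line $[0,+\infty]$. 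Hence $\Lambda_i$ is a union of open sets, i.e.\ open, and in particular $G_\delta$. (Note that there is no need for $E$ to be countable here: an arbitrary union of open sets is open.) Thus each $\Lambda_i$ is open and dense, and $\bigcap_{i\in\n}\Lambda_i$ is a dense $G_\delta$ by the Baire Category Theorem applied in the completely metrizable space ${\rm CMI}(M,\R^n)$.

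I do not expect a genuine obstacle in this claim; it is a soft, purely topological statement whose proof is a one-paragraph application of Baire category once the right reformulation is in hand. The only point that needs a moment of care is the bookkeeping: making sure that the continuity of $\Fcal(\cdot,C)$ into the \emph{extended} real line is exactly what is needed so that the super-level sets $\{u\colon\Fcal(u,C)>i\}$ are open even when $\Fcal$ may equal $+\infty$, and observing that it suffices to run $i$ over $\n$ (rather than over all reals) because $\sup_E\Fcal(u,\cdot)=+\infty$ if and only if $\sup_E\Fcal(u,\cdot)>i$ for every $i$, while $\sup_E\Fcal(u,\cdot)>i$ for a single $i$ already forces it to be positive in a quantitative sense. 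The real work of the paper lies not in this lemma but in later verifying hypotheses (A) and (B) for the specific functionals $\Fcal$ attached to each of the properties (i)--(vi), where hypothesis (B) will be supplied by the approximation/deformation lemmas for conformal minimal immersions cited in the introduction.
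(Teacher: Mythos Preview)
Your proof is correct and follows essentially the same approach as the paper: write $\Lambda_i=\bigcup_{C\in E}\{u\colon \Fcal(u,C)>i\}$, observe each piece is open by continuity of $\Fcal(\cdot,C)$ so $\Lambda_i$ is open (hence $G_\delta$), and then apply the Baire Category Theorem in the completely metrizable space ${\rm CMI}(M,\R^n)$ using hypothesis~(B). The additional commentary you give about the extended real line and the sufficiency of $i\in\n$ is accurate but not needed for the argument.
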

\begin{proof}  By the continuity of $\Fcal(\cdot,C)$, the set $\Lambda_i^C=\{u\in   {\rm CMI}(M,\R^n) \colon  \Fcal(u,C)>i\}$ is open in $  {\rm CMI}(M,\R^n) $ for all $C\in E$ and $i\in \n$, and so is $\Lambda_i=\bigcup_{C\in E} \Lambda_i^C$ for all $i\in \n$. Since $ {\rm CMI}(M,\R^n)$ is  completely metrizable by  Claim \ref{cl:metrizable} and $\Lambda_i$ is also dense in $  {\rm CMI}(M,\R^n)$, the Baire Category Theorem ensures that    $\bigcap _{i\in \n} \Lambda_i$ is a dense $G_\delta$ subset in ${\rm CMI}(M,\R^n)$.  
\end{proof}

In order to establish each of the statements, we shall consider a pair $(E,\Fcal)$ as in   Claim \ref{cl:patron} such that $\bigcap _{i\in \n} \Lambda_i$ equals the set we are dealing with. Condition {\rm (A)} in  the claim will always be clear, either trivially or from a straightforward application of the Cauchy estimates, and we will not discuss it (except when proving statement {\em (i)}, serving to exemplify how the Cauchy estimates are used). Thus,  the proof reduces to check condition {\rm (B)}. For this, we shall use the following Runge theorem for conformal minimal immersions (see \cite[Theorem 5.3]{AlarconForstnericLopez2016MZ} or \cite[Theorem 3.6.1]{AlarconForstnericLopez2021Book}), together with further results in the recently developed theory of approximation for minimal surfaces.
%
%
\begin{theorem}\label{th:Runge}
Let $M$ be an open Riemann surface, $K\subset M$ be a Runge compact set, and $u:K\to\R^n$ $(n\ge 3)$ be a conformal minimal immersion (on a neighborhood of $K$ in $M$). Then, for any $\epsilon>0$ there is $\hat u\in {\rm CMI}(M,\R^n)$ such that $|\hat u-u|<\epsilon$ everywhere on $K$.
\end{theorem}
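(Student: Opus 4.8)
The plan is to pass to the generalized Weierstrass representation and thereby reduce the statement to an Oka‑type approximation problem for maps into the punctured null quadric subject to a period constraint. Recall that a conformal minimal immersion $u$ defined on a neighbourhood $\Omega\subset M$ of $K$ is determined, up to an additive constant, by $\di u$: writing $2\di u=\Phi$, the form $\Phi$ is a holomorphic $1$-form on $\Omega$ with values in the punctured null quadric
\[
	\Agot_*=\Agot\setminus\{0\},\qquad \Agot=\Big\{z=(z_1,\dots,z_n)\in\C^n\colon \textstyle\sum_{j=1}^n z_j^2=0\Big\},
\]
where deleting the origin encodes the immersion condition $\di u\neq 0$; conversely $u(p)=u(p_0)+\mathrm{Re}\int_{p_0}^{p}\Phi$ as soon as the real periods $\mathrm{Re}\oint_\gamma\Phi$ vanish for all loops $\gamma$. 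Fix a nowhere‑vanishing holomorphic $1$-form $\theta$ on $M$ (Gunning--Narasimhan) and write $\Phi=f\theta$; then the theorem amounts to: given holomorphic $f\colon\Omega\to\Agot_*$, approximate it uniformly on $K$ by holomorphic $\hat f\colon M\to\Agot_*$ with $\mathrm{Re}\oint_\gamma \hat f\theta=0$ for every $\gamma\in H_1(M;\Z)$ and with $\mathrm{Re}\int_{p_0}^{\cdot}\hat f\theta$ uniformly $\epsilon$-close to $\mathrm{Re}\int_{p_0}^{\cdot}f\theta$ on $K$. Then $\hat u:=u(p_0)+\mathrm{Re}\int_{p_0}^{\cdot}\hat f\theta$ lies in $\CMI(M,\R^n)$ and is the desired approximant.

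The engine is the geometry of $\Agot_*$: it is smooth and connected, and it carries finitely many holomorphic vector fields tangent to $\Agot$ that span the tangent space at every point and whose flows are available globally. From this two things follow. First, $\Agot_*$ is an Oka manifold (this, and all the analytic machinery below, is developed in \cite{AlarconForstnericLopez2021Book}), so holomorphic maps from a neighbourhood of a Runge compact into $\Agot_*$ are uniform limits on that compact of global holomorphic maps $M\to\Agot_*$. Second, given finitely many loops $\gamma_1,\dots,\gamma_l$ in $\Omega$ one can embed $f$ in a holomorphic spray $f_w\colon\Omega\to\Agot_*$, $w$ in a ball $B\subset\C^N$, with $f_0=f$, $f_w\equiv f$ outside a prescribed coordinate disc, and such that the period map $w\mapsto\big(\mathrm{Re}\oint_{\gamma_i}f_w\theta\big)_{i=1}^{l}$ is a submersion at $w=0$; this \emph{period‑dominating spray} is obtained by composing $f$ with suitable compositions of the above flows, cut off near finitely many points of $\Omega$.

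With these in hand, run an induction along an exhaustion $K\subset K_0\subset K_1\subset\cdots$ of $M$ by smoothly bounded Runge compact domains adapted to a strictly subharmonic Morse exhaustion function of $M$, so that $K_{j+1}$ is obtained from $K_j$ either without change of topology or by attaching a single $1$-handle. In the first case: take a homology basis $\gamma_1,\dots,\gamma_l$ of $M$ already represented in $K_j$, embed the current $f$ in a period‑dominating spray over a neighbourhood of $K_j$, approximate the whole spray on $K_j$ by a spray of maps defined near $K_{j+1}$ using the Oka property, and then use the implicit function theorem --- the perturbed period map is $C^1$-close to the original, hence still submersive near $0$ --- to choose $w_0$ near $0$ restoring $\mathrm{Re}\oint_{\gamma_i}(\,\cdot\,)\theta=0$; replace $f$ by the corresponding member of the new spray. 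In the handle case, first enlarge the admissible set by an arc carrying the new generator and approximate there by a Mergelyan step for $\Agot_*$-valued $1$-forms (the same non‑smooth‑Runge Mergelyan statement is used at the very start to replace the given Runge compact $K$ by a smoothly bounded one), and then proceed as before with respect to the enlarged homology basis. Choosing the successive errors to decay fast enough, the iterates converge uniformly on compacts to $\hat f\colon M\to\Agot_*$ with vanishing real periods over all of $H_1(M;\Z)$, and integration gives $\hat u$ with $|\hat u-u|<\epsilon$ on $K$.

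The crux --- and the only genuinely hard part --- is the simultaneous handling of the two nonlinear constraints: the values must stay in $\Agot_*$, which forbids any coordinatewise application of the classical Runge/Mergelyan theorems, and the real periods must vanish on the full $H_1(M;\Z)$, which is not seen by any neighbourhood of $K$. Reconciling them forces the combination of the Oka property of $\Agot_*$ with the period‑dominating spray construction, applied one handle at a time; verifying the period domination (i.e.\ producing enough tangent vector fields on $\Agot_*$ and checking submersivity of the period map) and the auxiliary Mergelyan approximation for null‑quadric‑valued $1$-forms on admissible sets are the two technical points that carry the real weight.
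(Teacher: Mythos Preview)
Your sketch is correct and follows the approach developed in \cite{AlarconForstnericLopez2016MZ} and \cite[Chapter~3]{AlarconForstnericLopez2021Book}: reduce to approximating the $\Agot_*$-valued derivative $f=2\di u/\theta$, exploit that $\Agot_*$ is Oka, and control the real periods along an exhaustion via period-dominating sprays and the implicit function theorem, handling topology changes by Mergelyan on admissible sets. There is nothing to compare against, however, because the paper does not prove Theorem~\ref{th:Runge}: it is imported verbatim from the references just cited and used as a black box throughout Section~\ref{sec:genericCMI}. So your proposal is not an alternative to the paper's proof but rather an outline of the proof in the sources the paper invokes; as such it is accurate, with the caveat that the period-domination and Mergelyan steps you flag as ``the real weight'' are themselves nontrivial results whose proofs you are (reasonably) deferring to \cite{AlarconForstnericLopez2021Book}.
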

Let us now proceed to prove items {\em (i)} to {\em (ix)} in Theorem \ref{th:open}. This shall complete the proof of the theorem.
\begin{proof}[Proof of (i)] Let $\ggot$ be a Riemannian metric in $\R^n$. Fix a point $p_0\in\mathring K_1$, consider the family of compact sets $E=\{bK_j\colon j\in \n\}$  (see \eqref{eq:exhaustion}),  and set 
\[
\Fcal(u,C)= \dist_u^\ggot(p_0,C) \text{ for every $u\in {\rm CMI}(M,\R^n)$ and $C\in E$,}
\]
where $\dist_u^\ggot$ denotes the distance on $M$ associated to the pull-back metric $u^*\ggot$. Let $C=b K_j\in E$, $u\in  {\rm CMI}(M,\R^n)$, and $\mu>0$. The Cauchy estimates (relying on the fact that the uniform convergence on a compact set of harmonic or holomorphic functions implies the one of all their derivatives) guarantee the existence of a number $\sigma>0$ with the following property: for any $v\in  {\rm CMI}(M,\R^n)$ with $\|v-u\|_{K_j}<\sigma$ the metric $v^*\ggot$ is so close to $u^*\ggot$ on $K_j$ that  $|\dist_v^\ggot(p_0,C)- \dist_u^\ggot(p_0,C)|<\mu$. This proves  Claim \ref{cl:patron} {\rm (A)}.

Let $i\in \n$ and fix $u$, $K$, and $\epsilon$ as in   Claim \ref{cl:patron} {\rm (B)}. Choose $C=bK_j\in E$,  where $j\in\N$ is so large that $K\subset\mathring K_j$, and fix a number $\delta>0$ to be specified later. By \cite[Lemma 4.1]{AlarconDrinovecForstnericLopez2015PLMS} (see also \cite[Lemma 7.3.1]{AlarconForstnericLopez2021Book}) there is a conformal minimal immersion $\hat u:K_j\to\R^n$ such that $\|\hat u-u\|_{K_j}<\epsilon/2$ and $\dist_{\hat u}(p_0, C)>\delta$, where $\dist_{\hat u}$ denotes the distance function on $M$ associated to the metric $\hat u^*{ds^2}$ induced on $M$ via $\hat u$ by the standard Euclidean metric $ds^2$ in $\R^n$. Since $ds^2$ and the given metric $\ggot$ are comparable in the compact set $\{x\in\R^n\colon \dist(x,u(K_j))\le \epsilon\}\subset\R^n$, we can choose  $\delta>0$ so large that 
\begin{equation}\label{eq:i+1}
	\dist_{\hat u}^\ggot(p_0,C)>i.
\end{equation}
Next, by Theorem \ref{th:Runge} we can approximate $\hat u$ uniformly on $K_j$ by immersions $\tilde u\in\CMI(M,\R^n)$ with $\|\tilde u-\hat u\|_{K_j}<\epsilon/2$, and hence $\|\tilde u-u\|_K<\epsilon$. Moreover, if the approximation of $\hat u$ by $\tilde u$ on $K_j$ is close enough then, by \eqref{eq:i+1} and Cauchy estimates, we can also ensure that $\dist_{\tilde u}^\ggot(p_0,C)>i$, and hence $\tilde u\in \Lambda_i$; see \eqref{eq:Lambda-i}.  This shows {\rm (B)} and completes the proof in view of  Claim \ref{cl:patron}; note that  $\bigcap _{i\in \n} \Lambda_i$ equals the set of $\ggot$-complete immersions in ${\rm CMI}(M,\R^n)$.
\end{proof}

\begin{proof}[Proof of (ii)]
Let $v:\overline \d\to \r^n$ be a conformal minimal immersion (on a neighborhood of $\overline \d$ in $\c$), consider the family $E=\{D\subset M\colon \text{ $D$ smoothly bounded closed disc}\}$, and for each $D\in E$ denote by $\Lcal_D$ the set of all biholomorphisms $\overline \d\to D$.  Set 
\[
\Fcal(u,D)=\sup_{\varphi\in \Lcal_D} \frac1{\|u\circ \varphi-v\|_{\overline \d}}  \in (0,+\infty] \quad \text{for $u\in {\rm CMI}(M,\R^n)$ and $D\in E$.}
\]
 Let $i\in \n$ and fix $u$, $K$, and $\epsilon$ as in   Claim \ref{cl:patron} {\rm (B)}. We assume without loss of generality  that  $K\subset M$ is Runge, and pick $D\in E$ with $K\cap D=\varnothing$ and $\varphi\in \Lcal_D$. Since $K\cup D$ is Runge, by Theorem \ref{th:Runge} there is $\tilde u\in  {\rm CMI}(M,\R^n)$ such that $|\tilde u-u|<\epsilon$ on $K$ and $|\tilde u-v\circ \varphi^{-1}|<1/i$ on $D$, hence $\tilde u\in \Lambda_i$ (see \eqref{eq:Lambda-i}), proving {\rm (B)}. Thus,  Claim \ref{cl:patron}  ensures that the set
$\bigcap _{i\in \n} \Lambda_i$, which equals the one of conformal minimal immersions $M\to \r^n$  rebuilding $v$,
is a dense $G_\delta$ subset in $ {\rm CMI}(M,\R^n)$.
\end{proof}
\begin{proof}[Proof of (iii)--(vi)] 
Let $\varnothing \neq Z\subset \r^n$ be closed set  and $\rho\colon \r^n\to \r$ be a weight function.    Let $E=\{K_j\colon j\in \n\}$  (see \eqref{eq:exhaustion}), and
%
 for each $3\leq l\leq 6$ define the function $\Fcal_l\colon   {\rm CMI}(M,\R^n)\times E\to [0,+\infty]$ given by
 \[
 \Fcal_l(u,C)=\left\{
 \begin{array}{ll}
\displaystyle  \frac1{\dist(u(C),Z)}
 & \text{if }l=3\smallskip
\\
\displaystyle\sup_{C} |\Kcal_u| (\rho\circ u)
 & \text{if }l=4\smallskip
\\
\displaystyle \int_{C} |\Kcal_u| (\rho\circ u)dA_u
 & \text{if }l=5\smallskip
\\
\displaystyle \int_{C} (\rho\circ u) dA_u
 & \text{if }l=6\;.
 \end{array}\right.
 \]
 Let $i\in \n$ and fix $u$, $K$, and $\epsilon$ as in   Claim \ref{cl:patron} {\rm (B)}. We assume without loss of generality  that  $K\subset M$ is Runge.   Let $j\in\N$ be so large that $K\subset K_j\in E$ and  take a smoothly bounded closed disc $D\subset K_{j+1}\setminus K_j$.  Let $B\subset \r^n$ be an open ball and $\delta>0$ such that  $\rho|_{\overline B}>\delta$.   By Theorem \ref{th:Runge} applied to a suitable extension of $u|_{K_j}$ to $K_j\cup D$, for each $l\in \{3,4,5,6\}$ there is $\tilde u^l\in    {\rm CMI}(M,\R^n)$ such that $|\tilde u^l-u|<\epsilon$ on $K$,  $\tilde u^l(D)\subset B$ provided that $4\leq l\leq 6$, and
\[
 \dist(\tilde u^3(D),Z)<1/i,\quad \sup_D |\Kcal(\tilde u^4)| >i/\delta,
 \]
 \[  \int_{D} |\Kcal(\tilde u^5)|dA_{\tilde u^5}>i/\delta, \quad {\rm Area}(\tilde u^6(D))=\int_{D}dA_{\tilde u^6}>i/\delta.
 \]
 For cases $l=5,6$, use that the open ball $B$ contains complete bounded minimal discs \cite{Nadirashvili1996IM}, having infinite total curvature and area.
 Since $\rho|_{\overline B}>\delta$ and $\tilde u^l(D)\subset B$, $4\leq l\leq 6$, this shows that $\tilde u^l\in\Lambda_i^l=\{u\in  {\rm CMI}(M,\R^n)\colon \sup_E  \Fcal_l(u,\cdot)>i\}$ and hence $\Lambda_i^l$ is dense in $ {\rm CMI}(M,\R^n)$ for all $i\in \n$ and $3\leq l \leq 6$. Thus, $\bigcap_{i\in \n} \Lambda_i^l$ is a dense $G_\delta$ in $ {\rm CMI}(M,\R^n)$ for each $l$ by Claim \ref{cl:patron}. Since these are the sets in  {\em (iii)--(vi)}, the proof is done.
\end{proof}

\begin{proof}[Proof of (vii)] 
Let $A\subset\R^n$ be a properly immersed submanifold of codimension $\ge 2$ and $S\subset A$ be a compact subset with empty interior. 
By Claim \ref{cl:metrizable}  and the Baire category theorem, it suffices to prove that  the set
\[
	\Theta_i=\Big\{u\in{\rm CMI}(M,\R^n)\colon
	u(K_i)\cap S=\varnothing\Big\}
\]
 (see \eqref{eq:exhaustion}) is open and dense in  ${\rm CMI}(M,\R^n)$  for every $i\in \n$; observe that 
 $\Theta=\bigcap_{i\in\N}\Theta_i$ 
 equals the set of immersions in ${\rm CMI}(M,\R^n)$ with range in $\R^n\setminus S$. Indeed, the openness is clear by compactness of $K_i$ and $S$.
For the density, fix $i\in\N$ and choose an immersion $u\in\CMI(M,\R^n)$, a compact set $K\subset M$, and a number $\epsilon>0$. Since $\Theta_i\supset \Theta_j$ for all $j\geq i$, we can assume that $i\in\N$ is so large that $K\subset K_i$. An inspection of the proof of the general position theorem in \cite[Theorem 3.4.1]{AlarconForstnericLopez2021Book} shows that there is a conformal minimal immersion $\hat u:K_i\to\R^n$ such that $\|\hat u-u\|_{K_i}<\epsilon/3$, $\hat u$ intersects $A\subset\R^n$ transversely, and $\hat u(bK_i)\cap A=\varnothing$; see \cite[p.\ 145]{AlarconForstnericLopez2021Book} and recall that $\dim A\le n-2$. In particular, the set $\hat u(K_i)\cap A$ is  finite, and hence so is $\hat u(K_i)\cap S$. Since $S\subset A$ is compact and has empty relative interior, post-composing $\hat u$ with a suitable small translation in $\R^n$ we can obtain a conformal minimal immersion $\hat u':K_i\to\R^n$ such that $\|\hat u'-\hat u\|_{K_i}<\epsilon/3$ and $\hat u'(K_i)\cap S=\varnothing$. Finally, Theorem \ref{th:Runge}  allows to approximate $\hat u'$ uniformly on $K_i$ by an immersion $\tilde u\in\CMI(M,\R^n)$ with $\|\tilde u-\hat u'\|_{K_i}<\epsilon/3$, and hence $\|\tilde u-u\|_{K_i}<\epsilon$. Taking into account that both $K_i$ and $S$ are compact, if the approximation of $\hat u'$ by $\tilde u$ on $K_i$ is close enough then $\tilde u(K_i)\cap S=\varnothing$, and hence $\tilde u\in \Theta_i$. This completes the proof.
\end{proof}
\begin{proof}[Proof of (viii)]  For each $i\in \n$ choose an open neighborhood $\Delta_i$ of the diagonal $D_M$ of $M\times M$ such that $\Delta_i\supset \Delta_{i+1}$, $i\in \n$, and $\bigcap_{i\in \n} \Delta_i=D_M$. For every $i\in \n$ denote by $\Theta_i$ the set of those immersions 
$u\in{\rm CMI}(M,\R^n)$ such that  $d(\sigma_u)_{(p,q)}:T_{(p,q)}M\times M\to \R^n$ is surjective for all $(p,q)\in \sigma_u^{-1}(0)\cap (K_i\times K_i)\setminus \Delta_i$; see Definition \ref{de:def} {\rm (e)} and  \eqref{eq:exhaustion}.
By Claim \ref{cl:metrizable}  and the Baire category theorem, it suffices to prove that  the set $\Theta_i$ is 
  open and dense in  ${\rm CMI}(M,\R^n)$  for every $i\in \n$; observe that 
 $\Theta=\bigcap_{i\in\N}\Theta_i$ 
 equals    the set of immersions in ${\rm CMI}(M,\R^n)$   that self-intersect nicely. To check that $\Theta_i$ is open, just observe that for a given $u\in \Theta_i$  the Cauchy estimates give a number $\epsilon>0$ such that 
 $d(\sigma_{\hat u})_{(p,q)}$ is surjective for all  $(p,q)\in \sigma_{\hat u}^{-1}(0)\cap  (K_i\times K_i)\setminus \Delta_i$ and all $\hat u \in {\rm CMI}(M,\R^n)$ with $|\hat u-u|<\epsilon$ on $K_{i+1}$. For the density, given $u\in {\rm CMI}(M,\R^n)$, a Runge compact set $K\subset M$ with $K_i\subset K$, and a number $\epsilon>0$, an inspection of the proof of \cite[Theorem 3.4.1]{AlarconForstnericLopez2021Book} provides a conformal minimal immersion $\hat u\colon K\to \r^n$ such that $|\hat u-u|<\epsilon$ on $K$ and $d(\sigma_{\hat u})_{(p,q)}$ is surjective for all  $(p,q)\in \sigma_{\hat u}^{-1}(0)\cap  (K_i\times K_i)\setminus \Delta_i$.
By  Theorem \ref{th:Runge} we may assume that    $\hat u\in\CMI(M,\R^n)$, hence $\Theta_i$ is dense.
\end{proof}
\begin{proof}[Proof of (ix)]   Let $E=\{bK_j\colon j\in \n\}$  (see \eqref{eq:exhaustion}) and set 
\[
\Fcal(u,C)=\inf_C |u|  \in [0,+\infty) \quad \text{for every $u\in  {\rm CMI}(M,\R^n)$ and $C\in E$.}
\]
The function  $\Fcal(\cdot,C): {\rm CMI}(M,\R^n)\to [0,+\infty)$ is  continuous for each $C\in E$.  Let $i\in \n$ and fix $u$, $K$, and $\epsilon$ as in Claim \ref{cl:patron} {\rm (B)}.   Let $j\in \N$ be so large that $K\subset \mathring K_j$ and choose a smoothly bounded compact domain $K'\subset M$ such that $K\subset K'\subset\mathring K_j$ and $K'$ is a strong deformation retract of $K_j$. Up to post-composing $u$ with a small translation in $\R^n$, assume without loss of generality that $|u|>0$ everywhere on $bK'$. By \cite[Lemma 3.11.1]{AlarconForstnericLopez2021Book} (see also \cite[Lemma 7.2]{AlarconForstnericLopez2016MZ}), there is a conformal minimal immersion $\tilde u :K_j\to\R^n$ such that $\|\tilde u-u\|_{K'}<\epsilon$ and $\inf_{bK_j}|\tilde u|>i$. Further, by Theorem \ref{th:Runge} we may assume that $\tilde u \in {\rm CMI}(M,\R^n)$, and hence  $\tilde u$ lies in  the set $\Lambda_i$ given in \eqref{eq:Lambda-i}. Thus, $\Lambda_i$  is dense in $ {\rm CMI}(M,\R^n)$ and Claim \ref{cl:patron} ensures that 
\[
\bigcap _{i\in \n} \Lambda_i=\big\{u\in {\rm CMI}(M,\R^n)\colon \forall i\in \n \;\exists j\in \n\; \text{such that}\; \inf_{bK_j} |u| >i\big\}
\]
is a dense $G_\delta$ subset in $ {\rm CMI}(M,\R^n)$. To complete the proof, note that every immersion  $u\in \bigcap _{i\in \n} \Lambda_i$ is almost proper. Indeed, for any such $u$ there is a strictly increasing sequence $\{n_k\}_{k\in\N}\subset\N$ such that $\lim_{k\to+\infty} \inf_{bK_{n_k}}|u|=+\infty$. Thus, given a number $r>0$, we have that $u^{-1}(\{x\in\r^n\colon |x|\le r\})\cap bK_{n_k}=\varnothing$ for every large enough $k\in \n$. Since $K_{n_1}\Subset K_{n_2}\Subset\cdots\subset \bigcup_{k\in\N} K_{n_k}=M$ is an exhaustion of $M$ by compact domains, it turns out that every connected component of $u^{-1}(\{x\in\r^n\colon |x|\le r\})$ is compact, proving that $u:M\to\R^n$ is an almost proper map and completing the proof. 
\end{proof}
\section{Proof of the corollaries and Theorem \ref{th:index}, and further remarks}
\begin{proof}[Proof of Corollary \ref{co:open}]
Statements {\em (i)}, {\em (iii)}, and {\em (vii)} follow straightforwardly from Theorem \ref{th:open} {\em (i)}, {\em (iii)}, and {\em (vii)}, the Baire category theorem, and the fact that a countable intersection of $G_\delta$ sets is again a $G_\delta$. Indeed, for {\em (iii)} use that a subset $W\subset \r^n$ is dense if and only if ${\rm dist}(W,\{q\})=0$ for all $q\in \Q^n$, and  for {\em (vii)} that every meagre set in $A_j\subset \r^n$ is contained in a countable union of compact sets with empty interior.

To prove {\em (ii)} recall that the space  $ \CMI(\overline\d,\R^n)$ of all conformal minimal immersions $\c\supset \overline \d\to\r^n$, endowed with the compact-open topology, is separable by  Claim \ref{cl:disc-sep}.
Let $\Dcal=\{f_i\in \CMI(\overline\d,\R^n)\colon i\in\N\}$ be a dense countable subset of $\CMI(\overline\d,\R^n)$, and note that the set  $\Sigma=\{u\in \CMI(M,\R^n)\colon \text{$u$ rebuilds $f_i$ for all $i\in \N$}\}$ equals  the set of immersions in $\CMI(M,\R^n)$ rebuilding every conformal minimal disc, while Theorem \ref{th:open}{\em (ii)} shows that   $\Sigma$ is a dense $G_\delta$ set in $\CMI(M,\R^n)$.

Let us check {\em (iv)}; the proofs of  {\em (v)} and  {\em (vi)} follow analogously and we leave the details out.  Let $\{B_j\subset \r^n\colon j\in \n\}$ be a sequence of open balls being a countable basis of the Euclidean topology in $\r^n$. For each $j\in \n$ let $\rho_j:\r^n\to \r$ be a weight function on $\r^n$ with the support ${\rm supp}(\rho_j)\Subset B_j$, $0\leq \rho_j\leq 1$, and $\rho_j|_{B_j'}=1$ for some ball $B_j'\Subset B_j$. By Theorem \ref{th:open}{\em (iv)} and the Baire category theorem, the set $X$ of all immersions  $u\in\CMI(M,\R^n)$ such that  $\sup_M |K_u| (\rho_j\circ u)=+\infty$ for all $j\in \n$ is a dense $G_\delta$ subset. To finish, let us show that $X$ equals the set $Y$ of all immersions $u\in {\rm CMI}(M,\R^n)$   having dense image and unbounded curvature on $u^{-1}(\Omega)$ for every open set $\Omega\subset \r^n$. Indeed, if $u\in X$ and  $\Omega\subset \r^n$ is open, there is $j\in \n$ such that $B_j\subset \Omega$. Since   $\sup_M |K_u| (\rho_j\circ u)=+\infty$ and   ${\rm supp}(\rho_j)\subset B_j$, we have that $\varnothing\neq u^{-1}(B_j)\subset u^{-1}(\Omega)$. This implies that $u$ has dense image. Furthermore, since $0\leq \rho_j\leq 1$ it turns out that
\[
\sup_{u^{-1}(\Omega)} |K_u|\geq\sup_{u^{-1}(\Omega)} |K_u|(\rho_j\circ u)\geq \sup_{u^{-1}(B_j)} |K_u|(\rho_j\circ u)=  \sup_M |K_u| (\rho_j\circ u)=+\infty,
\]
which implies that  $u\in Y$. On the other hand, if $u\in Y$ and $j\in \n$ then $u^{-1}(B_j')\neq \varnothing$, and since $\rho_j|_{B_j'}=1$ we infer that
\[      
 \sup_M |K_u| (\rho_j\circ u)=\sup_{u^{-1}(B_j)} |K_u|(\rho_j\circ u)\geq \sup_{u^{-1}(B_j')} |K_u| (\rho_j\circ u)=\sup_{u^{-1}(B_j')} |K_u|=+\infty,
\]
where the last equality holds since $u\in Y$ and $B_j'$ is open in $\r^n$. This proves  that $u\in X$ and concludes the proof.
\end{proof}
 \begin{proof}[Proof of Theorem \ref{th:index}]
 Let $M$ be  an open Riemann surface and $B\subset \r^3$ be an open ball. Reasoning as in the proof of Corollary \ref{co:open} {\em (iv)}, it suffices to show that  the set $X$ of all immersions $u\in {\rm CMI}(M,\R^3)$ with $u^{-1}(B)\neq \varnothing$  and having infinite index of stability on $u^{-1}(B)$   is a dense $G_\delta$ subset. For this, take an exhaustion $K_j$, $j\in \n$, of $M$ as in \eqref{eq:exhaustion}, and for each $i\in \n$ set 
 \[
 \Lambda_i=\{u\in   {\rm CMI}(M,\R^3)\colon \sup_{j\in \n}  \Ical_B(u,j)>i\},
 \]
 where $ \Ical_B(u,j)$ is the index of stability of $u$ on $\mathring K_j\cap u^{-1}(B)$, that is to say, the supremum of the stability index of $u|_K$ among all the smoothly bounded compact  domains $K\subset \mathring  K_j\cap u^{-1}(B)$. Since the function $\Ical_B(\cdot ,j):  {\rm CMI}(M,\R^3)\to \r$ takes its values in $\z_+$ and is lower semicontinuous by Cauchy estimates, we have that $\Lambda_i$ is open for all $i\in \n$. Observe there are conformal minimal discs $\d\to B$ with index of stability as big as desired (e.g.,  pieces of suitable helicoids with axes intersecting $B$), and hence an analogous argument to that in the proof of Theorem \ref{th:open}{\em (iii)--(vi)} shows that $\Lambda_i$ is dense for all $i\in \n$.  Since $X=\bigcap_{i\in \n}\Lambda_i$, the Baire category theorem completes the proof in view of Claim \ref{cl:metrizable}.
\end{proof} 
\begin{proof}[Proof of Corollary \ref{co:DP}]
Item {\em (i)} follows trivially from Theorem \ref{th:open} {\em (viii)}, since a conformal minimal immersion $M\to \r^n$, $n\geq 5$, self-intersects nicely if and only if it is injective. 
The first part of {\em (ii)} follows from \cite[Theorem 3.4.1(c)]{AlarconForstnericLopez2021Book} and the ideas in the proof of Theorem \ref{th:open} {\em (viii)}. Note that if $u\in {\rm CMI}(M,\R^4)$ has simple double points and $K\subset M$ is compact then the set $\{p\in K\colon u^{-1}(u(p))\cap K\neq \varnothing\}$  is finite, hence ${\rm DP}(u)$ is countable. Likewise, if $u\in {\rm CMI}(M,\R^3)$ self-intersects nicely then the set $\{p\in K\colon u^{-1}(u(p))\cap K\neq \varnothing\}$ is a countable union of regular curves and boundary points. 
The proof of items {\em (ii)} and {\em (iii)} is then completed by Corollary \ref{co:open} {\em (ii)} and the following claim; see Definition \ref{de:def}.
\begin{claim}\label{cl:DP}
If $M$ is an open Riemann surface and $n\in\{3,4\}$, then every immersion in $\CMI(M,\R^n)$ rebuilding every conformal minimal disc is densely non-injective.
\end{claim}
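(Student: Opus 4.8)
The plan is to show directly that ${\rm DP}(u)$ meets every nonempty open set $V\subset M$. Fix such a $V$, choose a smoothly bounded closed disc $D_0\subset V$ and a biholomorphism $\phi_0\colon\overline\D\to D_0$, and set $v_0:=u\circ\phi_0\in\CMI(\overline\D,\R^n)$ (an immersion, since $u$ is). Write $y^\ast:=v_0(0)$ and let $T^\ast:=(dv_0)_0(T_0\D)\subset\R^n$ be the tangent plane at $y^\ast$ of the minimal patch $u(D_0)=v_0(\overline\D)$. A preliminary remark: although rebuilding is defined through $C^0$-approximation only, one upgrades it to $C^1$-approximation exactly as in Section \ref{sec:genericCMI} — applying the hypothesis to the dilated disc $\zeta\mapsto v(\lambda\zeta)$ with $\lambda>1$ and invoking the Cauchy estimates, one obtains that for every conformal minimal disc $v$ and every $\eta>0$ there are a smoothly bounded closed disc $D\subset M$ and a biholomorphism $\varphi\colon\overline\D\to D$ with $\|u\circ\varphi-v\|_{C^1(\overline\D)}<\eta$.

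Next I would construct a \emph{probe disc}: the flat conformal minimal immersion $w\colon\overline\D\to\R^n$ given by $w(\zeta)=y^\ast+\delta\bigl((\mathrm{Re}\,\zeta)e_1+(\mathrm{Im}\,\zeta)e_2\bigr)$, where $e_1,e_2\in\R^n$ are orthonormal and $\delta>0$ is small. This is harmonic, satisfies $\sum_i(\partial_\zeta w_i)^2=0$ by orthonormality, is an immersion, and extends to all of $\C$, so $w\in\CMI(\overline\D,\R^n)$, and $w(0)=y^\ast=v_0(0)$. Here is \emph{the only place where $n\le 4$ enters}: since $\dim\Pi+\dim T^\ast=4\ge n$ for $\Pi:=\span\{e_1,e_2\}$, one may pick $e_1,e_2$ so that $\Pi$ is transverse to $T^\ast$, i.e.\ $\Pi+T^\ast=\R^n$ (for $n=3$ this just means $\Pi\ne T^\ast$, for $n=4$ that $\Pi\cap T^\ast=0$; in either case this is an open dense condition on $\Pi$). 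Consequently the map $F_0(z,z'):=w(z)-v_0(z')$ on $\overline\D\times\overline\D$ vanishes at $(0,0)$ and has surjective differential there, its image being $\Pi+T^\ast=\R^n$; so the zero set of $F_0$ near $(0,0)$, together with the surjectivity of the differential, persists under $C^1$-small perturbations of $w$, by the quantitative implicit function/submersion theorem.

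Finally I would feed the probe disc to the hypothesis. Taking $\eta$ small in the $C^1$-rebuilding above produces a smoothly bounded closed disc $D\subset M$ and a biholomorphism $\varphi\colon\overline\D\to D$ with $u\circ\varphi$ $C^1$-close to $w$, hence with $F(z,z'):=u(\varphi(z))-v_0(z')$ $C^1$-close to $F_0$. By the robustness just noted, $F$ has a zero $(z,z')$ near $(0,0)$ — in particular with $z,z'$ interior to $\D$ — at which $dF$ is surjective; that is, $u(\varphi(z))=v_0(z')=u(\phi_0(z'))=:y$, and the tangent planes of the immersed surfaces $u(D)$ and $u(D_0)$ at $y$ span $\R^n$, hence are distinct. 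Set $p:=\phi_0(z')\in D_0$. Then $p\ne\varphi(z)$: otherwise $p$ would be an interior point of both $D$ and $D_0$, and on the neighbourhood of $p$ in $M$ where they overlap $u|_D$ and $u|_{D_0}$ both equal $u$, so the two tangent planes at $y$ would coincide — a contradiction. Therefore $u(p)=u(\varphi(z))$ with $\varphi(z)\ne p$, i.e.\ $p\in{\rm DP}(u)$, and $p$ lies in the interior of $D_0\subset V$. Since $V$ was arbitrary, ${\rm DP}(u)$ is dense, which is the assertion.

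The main obstacle — and the reason for the detour through a probe disc rather than simply rebuilding a self-intersecting disc — is that there is no control over \emph{where} in $M$ the rebuilding disc $D$ lands: since $u$ already has dense image (it rebuilds arbitrarily small flat discs near every point of $\R^n$), copies of any prescribed patch can be approximated by subdiscs of $M$ sitting anywhere, so a double point produced \emph{inside} $D$ need not lie in $V$. Crossing the \emph{fixed} patch $u(D_0)\subset V$ circumvents this, because then one of the two preimages of the manufactured double point is automatically the point $\phi_0(z')\in D_0$. What remains is the quantitative transversality bookkeeping of the last two paragraphs — keeping the perturbed zero of $F$ interior and its differential surjective, and distinguishing a genuine double point from a spurious coincidence — which is routine.
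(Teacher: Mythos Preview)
Your argument is correct and is essentially the paper's own proof: both pick a point $p\in V$, build a flat probe disc through $u(p)$ transverse to $du_p(T_pM)$ (the only place $n\le4$ is used), rebuild it, and exploit the transversality to produce a double point with one preimage in $V$. Your version is merely more explicit---the dilation/Cauchy upgrade to $C^1$ and the tangent-plane argument separating $\phi_0(z')$ from $\varphi(z)$ spell out what the paper condenses into the single line ``conditions \eqref{eq:f0up} and \eqref{eq:duTpM} ensure that $u(V)\cap u(D)\neq\varnothing$ and $V\cap D=\varnothing$ whenever $\epsilon>0$ is sufficiently small.''
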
 
To prove the claim, assume that $u\in\CMI(M,\R^n)$ rebuilds every conformal minimal disc and choose a smoothly bounded, relatively compact, open disc $V\subset M$. We need to prove that $V\cap{\rm DP}(u)\neq\varnothing$; see \eqref{eq:DP}. So, pick $p\in V$ and let $f:\overline\D\to\R^n$ be a conformal minimal immersion such that $f(\overline\D)$ is a planar round disc in an affine $2$-plane in $\R^n$, 
\begin{equation}\label{eq:f0up}
	f(0)=u(p),
\end{equation} 
and $du_p(T_pM)+df_0(T_0\d)=\R^n$; recall that $n\le 4$. Up to replacing $V$ by a smaller neighborhood of $p$, we can assume that
\begin{equation}\label{eq:duTpM}
du_q(T_qM)+df_z(T_z\d)=\R^n\quad \text{for all }q\in V\text{ and }z\in\D; 
\end{equation}
take into account that $df_z(T_z\d)\equiv df_0(T_0\d)$ for all $z\in\D$. Since $u$ approaches every conformal minimal disc, it turns out that for any $\epsilon>0$ there are a smoothly bounded closed disc $D$ in $M$ and a biholomorphism $\varphi:\overline\D\to D$ such that $\|u\circ\varphi-f\|_{\overline\D}<\epsilon$, and thus $\|u-f\circ\varphi^{-1}\|_D<\epsilon$ as well.  
 Since $n\in\{3,4\}$, conditions \eqref{eq:f0up} and \eqref{eq:duTpM} ensure that $u(V)\cap u(D)\neq\varnothing$ and $V\cap D=\varnothing$ whenever that $\epsilon>0$ is sufficiently small, and hence $V\cap {\rm DP}(u)\neq\varnothing$. This shows that ${\rm DP}(u)$ is dense in $M$.
\end{proof}


\begin{remark}\label{re:apen}
Let $M$ be an open Riemann surface and $n\geq 3$ be an integer.

(A) Let $\pgot\colon H_1(M,\z)\to \r^n$ be a group homomorphism and  $\CMI_\pgot(M,\R^n)\subset  \CMI(M,\R^n)$ be the subspace of conformal minimal immersions with the flux map $\pgot$ (see  \cite[Definition 2.3.2]{AlarconForstnericLopez2021Book}). A very minor modification of the proofs shows that all the results in this paper also hold  with $\CMI_\pgot(M,\R^n)$ in place of $\CMI(M,\R^n)$.

(B) Let $\aleph= (\Lambda,v,r)$ where $\Lambda\subset M$ is a closed discrete subset, $v$ is a conformal minimal immersion from a neighborhood of $\Lambda$ into $\R^n$, and $r:\Lambda\to\Z_+=\{0,1,2,\ldots\}$ is a map, and denote by ${\rm CMI}_\aleph(M,\R^n)$ the subspace of ${\rm CMI}(M,\R^n)$ consisting of those immersions agreeing with $v$ to order at least $r(p)$ at every point $p\in\Lambda$.
A very minor modification of the proofs shows that all the results in this paper also hold  with $\CMI_\aleph(M,\R^n)$ in place of $\CMI(M,\R^n)$.  In the cases of Theorem \ref{th:open}{\em (vii)}, Corollary \ref{co:open}{\em (vii)}, and Corollary \ref{co:DP}, the prescription of values $\aleph$ must be compatible with the theses.

(C) If $\pgot$ and $\aleph$ are as in (A) and (B) respectively, then the analogous results hold for the subspace $\CMI_\pgot(M,\R^n)\cap \CMI_\aleph(M,\R^n)$.

(D) Let $R$ be a compact bordered Riemann surface, and assume that $R$  {\em  appears recurrently} in $M$, meaning that  for any Runge compact set $K\subset M$ there is a smoothly bounded  compact domain $L\subset M\setminus K$ being biholomorphic to $R$ such that $K\cup L$ is Runge. Arguing as in the proofs of Theorem \ref{th:open}{\em (ii)} and Corollary \ref{co:open}{\em (ii)}, one can see that  the subspace  $\CMI_R(M,\R^n)\subset  \CMI(M,\R^n)$ of  immersions rebuilding every conformal minimal immersion $R\to \r^n$ is residual (cf.\ Definition \ref{de:def}(c)).  Likewise, if $R_j$, $j\in \n$, is a sequence of compact bordered Riemann surfaces appearing recurrently in $M$,\footnote{By  conformal surgery theory, one can construct  open Riemann surfaces $M$ such that  $R_j$ appears recurrently in $M$ for all $j\in \n$.} then 
 $\bigcap_{j\in \n} \CMI_{R_j}(M,\R^n)$ is still residual in $ \CMI(M,\R^n)$. 
\end{remark}

\begin{remark}
 The results in this paper also hold in the non-orientable framework.  Indeed, let $N$ be an open non-orientable surface furnished with a conformal structure, and denote by  $\CMI(N,\R^n)$  the space of conformal minimal immersions $N\to \r^n$ endowed with the compact-open topology. Combining the methods of proof in this paper with the results in \cite{AlarconForstnericLopez2020MAMS}, one can see that the analogues of Theorem \ref{th:open} and its corollaries, as well as those in Remark \ref{re:apen}, are valid for $\CMI(N,\R^n)$.
\end{remark}
%
%
%


\subsection*{Acknowledgements}
This research was partially supported by the State Research Agency (AEI) via the grants no.\ PID2020-117868GB-I00 and PID2023-150727NB-I00, and the ``Maria de Maeztu'' Excellence Unit IMAG, reference CEX2020-001105-M, funded by MICIU/AEI/10.13039/501100011033 and ERDF/EU, Spain.




\medskip
\noindent Antonio Alarc\'{o}n, Francisco J. L\'opez
\newline
\noindent Departamento de Geometr\'{\i}a y Topolog\'{\i}a e Instituto de Matem\'aticas (IMAG), Universidad de Granada, Campus de Fuentenueva s/n, E--18071 Granada, Spain.
\newline
\noindent  e-mail: {\tt alarcon@ugr.es}, {\tt fjlopez@ugr.es}

\end{document}